\newcommand{\R}{{\mathbb R}}
\newcommand{\N}{{\mathbb N}}
\newcommand{\Z}{{\mathbb Z}}
\newcommand{\EE}{{\mathbb E}}
\newcommand{\PP}{{\mathbb P}}
\newcommand{\1}{1}
\newcommand{\pp}{\varphi}
\newcommand{\alg}{\mathcal{X}}
\newcommand{\ovw}{v}
\theoremstyle{plain}
\newtheorem{theorem}{Theorem}
\newtheorem{lemma}{Lemma}
\newtheorem{cor}{Corollary}
\theoremstyle{definition}
\newtheorem{rem}{Remark}
\newtheorem{ex}{Example}
\begin{document}
\title[Non-polynomial lower error bounds for strong approximation of SDEs]{On non-polynomial lower error bounds for adaptive strong approximation of SDEs}

\author[Yaroslavtseva]
{Larisa Yaroslavtseva}
\address{
Fakult\"at f\"ur Informatik und Mathematik\\
Universit\"at Passau\\
Innstrasse 33 \\
94032 Passau\\
Germany} \email{larisa.yaroslavtseva@uni-passau.de}

\begin{abstract}
Recently, it has been shown in \cite{hhj12} that there exists a system of stochastic differential equations (SDE) on the time interval $[0,T]$ with infinitely often differentiable and bounded coefficients  such that the Euler scheme with equidistant time steps converges to the solution of this SDE at the final time  in the strong sense but with no polynomial rate. Even worse, in \cite{JMGY15} it has been shown that for any sequence  $(a_n)_{n\in\N}\subset (0,\infty)$, which may converge to zero arbitrary slowly,  there exists an SDE on $[0,T]$ with infinitely often differentiable and bounded coefficients  such that no approximation of the solution of this SDE at the final time
based on $n$
evaluations of the driving Brownian
motion at fixed time points can achieve a smaller absolute mean error  than the given number $a_n$.
 In the present article we  generalize the latter result to the case when the approximations may choose the location as well as the
number of the evaluation sites of the driving Brownian motion in an
adaptive way dependent on the values of the Brownian motion observed so far.

\end{abstract}

\maketitle

\section{Introduction}

Let $d,m\in \N$, $T\in(0, \infty)$, consider a $d$-dimensional system of autonomous stochastic differential equations (SDE)
\begin{equation}\label{sde0}
\begin{aligned}
dX(t) & = \mu(X(t)) \, dt + \sigma(X(t)) \, dW(t), \quad t\in [0,T],\\
X(0) & = x_0
\end{aligned}
\end{equation}
with a deterministic initial value $x_0\in\R^d$, a drift coefficient $\mu\colon\R^d\to\R^d$, a diffusion coefficient $\sigma\colon \R^d\to\R^{d\times m}$ and an $m$-dimensional driving Brownian motion $W$, and assume that \eqref{sde0} has a unique strong solution $(X(t))_{t\in[0, T]}$. Our computational task is to approximate $X(T)$ by means of methods that use finitely many evaluations of the driving Brownian motion $W$. In particular we are interested in the following question: under which assumptions on the coefficients $\mu$ and $\sigma$ exists a method of the latter type, which converges to 
$X(T)$ in absolute mean with a polynomial rate?

It is well-known that if the coefficients $\mu$ and $\sigma$ are globally Lipschitz continuous then the classical Euler scheme achieves the rate of convergence $1/2$, see \cite{m55}. Moreover,
the recent literature on numerical approximation of SDEs contains a number
of results on approximation schemes that are specifically designed for non-Lipschitz coefficients and achieve
polynomial convergence rates for suitable classes of such SDEs, 
see e.g.
\cite{h96,hms02,
HutzenthalerJentzenKloeden2012, MaoSzpruch2013Rate,
WangGan2013,Sabanis2013ECP,TretyakovZhang2013,Beynetal2014,KumarSabanis2016, Beynetal2016}
for SDEs with globally monotone coefficients
and see e.g.
\cite{BerkaouiBossyDiop2008,GyoengyRasonyi2011,DereichNeuenkirchSzpruch2012,
Alfonsi2013,NeuenkirchSzpruch2014,HutzenthalerJentzen2014,
HutzenthalerJentzenNoll2014CIR, 
LS15b,LS16, Tag16, HH16b} 
for SDEs with possibly non-monotone coefficients.

On the other hand, it has recently been shown in \cite{JMGY15}  that for any sequence  $(a_n)_{n\in\N}\subset (0,\infty)$, which may converge to zero arbitrary slowly,  there exists an SDE \eqref{sde0} with $d=4$ and $m=1$ and with infinitely often differentiable and bounded coefficients $\mu$ and $\sigma$ such that no approximation of $X(T)$
based on finitely many
evaluations of the driving Brownian
motion $W$ converges in absolute mean  faster than the given sequence $(a_n)_{n\in\N}$. 
More formally, 
\begin{equation}
\label{eq:intro3} 
  \inf_{ s_1, \dots, s_n \in [0,T] }
  \inf_{
    \substack{
      u \colon \R^n \to \R^4
    \\
      \text{measurable}
    }
  }
  \EE
    \big\|
      X( T )
      -
      u\big( W( s_1 ), \dots, W( s_n )
      \big)
    \big\|
\geq  a_n.
\end{equation}
In particular, there exists an SDE \eqref{sde0} with infinitely often differentiable and bounded coefficients $\mu$ and $\sigma$ such that
its solution at the final time can not be approximated with a polynomial rate of convergence based on finitely many
evaluations of the driving Brownian
motion $W$. We add that the latter statement in the special case when the approximation is given by the Euler scheme with equidistant time steps has first been shown in \cite{hhj12}.

Note that the time points $s_1, \ldots, s_n\in[0,T]$ that are used by an approximation $u( W( s_1 ), \dots,$ $W( s_n ))$ in \eqref{eq:intro3} are fixed, and therefore
this negative result does not cover approximations that may choose the number as well as the
location of the evaluation sites of the driving Brownian motion $W$ in an
adaptive way, e.g. numerical schemes that adjust the actual step size according to a criterion that is based on the values of the driving Brownian motion $W$ observed so far, see e.g. \cite{Gaines1997,MG02_habil,m04, Moon2005, RW2006, LambaMattinglyStuart2007, Hoel2012,Hoel2014} and the references therein. See Section \ref{approximations} for the formal definition of that type of approximations.  It is
well-known that for SDEs \eqref{sde0} with (essentially) globally Lipschitz continuous coefficients $\mu$ and $\sigma$ adaptive approximations can not achieve a better rate of convergence compared to what is
best possible for non-adaptive ones, which at the same time coincides with the best possible rate of convergence that can be achieved by any approximation based on $W(\tfrac{T}{n}), W(\tfrac{2T}{n}), \ldots, W(T) $, see \cite{MG02_habil,m04}.
However, as has recently turned out, this is not necessarily the case anymore if the coefficients $\mu$ and $\sigma$ are not both globally Lipschitz continuous.  In ~\cite{HH16} it has been shown that for the one-dimensional squared Bessel process,
which is the solution of the SDE \eqref{sde0} with $d=m=\mu=1$ and $\sigma(x)=2\sqrt {|x|}$ for $x\in\R$ the following holds: the best possible rate of convergence that can be achieved by any approximation based on $W(\tfrac{T}{n}), W(\tfrac{2T}{n}), \ldots, W(T) $ equals $1/2$, i.e. there exist $c_1, c_2>0$ such that 
\[
 c_1\cdot n^{-1/2}\leq  \inf_{
    \substack{
      u \colon \R^n \to \R
    \\
      \text{measurable}
    }
  }
  \EE
    \big|
      X( T )
      -
      u\big( W(\tfrac{T}{n}), W(\tfrac{2T}{n}), \ldots, W(T)
      \big)
    \big|
\leq  c_2\cdot n^{-1/2},
\]
 while the best possible rate of convergence that can be achieved by  approximations based on $n$ adaptively chosen evaluations of the driving Brownian motion $W$ equals infinity. More formally, for every $\alpha>0$ there exists $c>0$ and a sequence of approximations $\widehat X_n$ based on $n$ adaptively chosen evaluations of $W$ such that 
\[
\EE |X(T)-\widehat X_n|\leq c\cdot
n^{-\alpha}.
\]

In view of the latter result one might hope that a non-polynomial lower error bound $a_n$
in \eqref{eq:intro3} could be overcome by using adaptive approximations, see also the discussion in ~\cite[p.\,2]{Gobet2016}. In the present article we prove that the pessimistic alternative is true. 
We show that for any sequence  $(a_n)_{n\in\N}\subset (0,\infty)$, which may converge to zero arbitrary slowly,  there exists an SDE \eqref{sde0} with $d=4$ and $m=1$ and with infinitely often differentiable and bounded coefficients $\mu$ and $\sigma$ such that no approximation 
based on $n$ adaptively chosen evaluations  of the driving Brownian
motion $W$ on average can achieve a smaller absolute mean error than the given number  $a_n$, i.e.
\[ 
  \EE
    \big\|
      X( T )
      -
     \widehat X_n
    \big\|
\geq  a_n
\]
for any approximation $ \widehat X_n$ of the latter type.
This fact is an immediate consequence of Corollary \ref{cor3} in Section \ref{results}  together with an appropriate scaling argument. 
For the proof of the latter result  we employ the same class of SDEs as in ~\cite{JMGY15}.
Thus, roughly speaking, these SDEs can not be solved approximately in the strong sense in a reasonable computational time by means of any kind of adaptive (or nonadaptive) method based on finitely many evaluations of the driving Brownian motion $W$.

We conjecture that a similar negative result does even if one  allows for adaptive  approximations based on finitely many  evaluations of arbitrary linear continuous functionals of the
driving Brownian motion $W$.
However, in this case one  can not employ the class of SDEs from \cite{JMGY15} since for every such SDE  its solution at the final time can be approximated with error zero based on the evaluation of only two linear continuous functionals of the
driving Brownian motion $W$, see \eqref{solutionSDE}

We add that negative results in the spirit of \eqref{eq:intro3} for quadrature problems for marginal distributions of SDEs have recently been established in \cite{MGY16}.

We briefly describe the content of the paper. In Section~\ref{not}
we fix some notation. 
In Section~\ref{sec:setting} we briefly introduce the class of SDEs from \cite{JMGY15}, which is studied in this article as well. In Section~\ref{approximations} we formally define the class 
of adaptive approximations, which are analysed in this article. Our lower error bounds are stated in Section \ref{results}. The proof of the
main result, Theorem \ref{t1}, is carried
out in Section~\ref{strong}.

\section{Notation}\label{not}

Throughout this article the following notation is used.
For a set $ A $, a vector space $ V$, a set $ B \subseteq V $,
and a function $ f \colon A \to B $
we
put
$
  \operatorname{supp}( f ) = \left\{ x \in A \colon f(x) \neq 0 \right\}
$. For sets $A$, $B$, a function $f \colon A \to B$ and a subset
$E\subseteq A$ we denote by $f|_E$ the restriction of $f$ to $E$.
Moreover, for $ d \in \N $ and $ v \in \R^d $ we write $
  \|v\|
$ for the Euclidean norm of $ v$. 
For $n\in\N$ and $-\infty<a<b<\infty$ we denote by  $\mathfrak
B(\R^n)$ and   $\mathfrak
B(C([a,b]))$  the Borel $\sigma$-fields on $\R^n$ and on $C([a,b])$, respectively, where the latter space is equipped with the supremum norm. For $S$ being a finite product of the latter two spaces we denote by  $\mathfrak
B(S)$ the Borel  $\sigma$-field on $S$ generated by the respective product topology.

\section{A family of SDEs with smooth and bounded coefficients}
\label{sec:setting}

Throughout this article we study 
SDEs 
provided by the following setting. 

Let
$ T  \in (0,\infty) $,
let
$ ( \Omega, \mathcal{F}, \PP ) $
be a probability space with a normal filtration
$ ( \mathcal{F}_t )_{ t \in [0,T] } $,
and let
$
  W \colon [0,T] \times \Omega \to \R
$
be a
standard $ ( \mathcal{F}_t )_{ t \in [0,T] } $-Brownian motion
on $ ( \Omega, \mathcal{F}, \PP ) $.

Let $ 0<\tau_1<\tau_2<T$ and let $ f, g, h \in C^{ \infty }(\R) $ be
bounded and satisfy $
  \operatorname{supp}( f ) \subseteq ( - \infty, \tau_1 ]
$,
$
  \inf_{ t\in [ 0, \tau_1/2  ] } | f'(t) | > 0
$,
$
  \operatorname{supp}( g ) \subseteq [ \tau_1, \tau_2 ]
$,
$g\not =0$,
$
  \operatorname{supp}( h ) \subseteq [ \tau_2, \infty )
$,
and
$
  \int_{ \tau_2 }^{ T } h(t) \, dt \neq 0
$.

For every $
  \psi \in C^{ \infty }( \R)
$
let
$
  \mu^{\psi} \colon \R^4 \to \R^4
$
and
$
  \sigma \colon \R^4 \to \R^4
$
be given by
\begin{align*}
  \mu^{ \psi }(x) & = \bigl( 1, 0, 0, h( x_1 ) \cdot \cos( x_2 \, \psi( x_3 ) ) \bigr),\\
  \sigma(x) &=
  \bigl(
    0, f( x_1 ) , g( x_1 ), 0
  \bigr)
\end{align*}
and
 consider the following $4$-dimensional system of SDEs
\begin{equation}\label{sde}
\begin{aligned}
dX^{\psi }(t) & = \mu^\psi (X^{\psi }(t)) \, dt + \sigma(X^{\psi }(t)) \, dW(t), \quad t\in [0,T],\\
X^{\psi}(0) & = 0.
\end{aligned}
\end{equation}

\begin{rem}
Note that for  every $ \psi \in C^{ \infty }( \R ) $ the
functions $ \mu^{ \psi } $ and $\sigma$ are infinitely often
differentiable and bounded.
\end{rem}

\begin{rem}\label{solution}
It is easy to see that for every $ \psi \in C^{ \infty }( \R
) $ the SDE \eqref{sde} has a unique strong solution given by
\begin{align}\label{solutionSDE}
  X_1^{ \psi }(t) & = t ,
\qquad
  X_2^{ \psi }(t) = \int_0^{ \min( t, \tau_1 ) } f(s) \, dW(s),\notag
\\
  X_3^{ \psi }(t) & = 1_{ [ \tau_1 , \, T ] }( t ) \cdot
  \int_{ \min( t, \tau_1 ) }^{ \min( t , \tau_2 ) } g(s) \, dW(s) ,
\\
  X_4^{ \psi }(t) & =
  1_{ [ \tau_2 , \, T ] }(t) \cdot
  \cos\bigl(
    X_2^{ \psi }( \tau_1 ) \,
    \psi\big(
      X_3^{ \psi }( \tau_2 )
    \big)
  \bigr)
  \cdot
  \int_{ \tau_2 }^{ t } h(s) \, ds\notag
\end{align}
for all $ t \in [0,T] $.
\end{rem}

\section{Adaptive strong approximations}
\label{approximations} Let $\delta\in (0,T]$. We study general
strong approximations of $X^\psi(T)$ based on $(W(t))_{t\in[\delta,
T]}$ and on finitely many sequential evaluations of $W$ in the
interval $(0, \delta)$. Every such approximation $\widehat X\colon
\Omega\to\R^4$ is defined by three sequences
\[
\pp=(\pp_n)_{n\in\N}, \quad \chi=(\chi_n)_{n\in\N},\quad
\phi=(\phi_n)_{n\in\N}
\]
of measurable mappings
\begin{align}\label{sequences}
\pp_n&\colon\R^{n-1} \times \R^{[\delta, T]} \to (0, \delta),\notag\\
\chi_n&\colon\R^{n}\times \R^{[\delta, T]}\to\{0,1\},\\
\phi_n&\colon\R^{n}\times \R^{[\delta, T]}\to\R^4.\notag
\end{align}
The sequence $\pp$ determines the evaluation sites of a trajectory
of $W$ in the interval $(0, \delta)$. The total number of
evaluations is determined by the sequence $\chi$ of stopping rules.
Finally, the sequence $\phi$ is used to obtain the approximation to
$X^\psi(T)$ from the observed data.

More precisely, let $\omega\in\Omega$, let $w=W(\omega)$ be the corresponding trajectory of $W$ and put $\ovw=(w(t))_{t\in[\delta, T]}$. The sequential observation of $w$
starts at the knot $\pp_1(\ovw)$. After $n$ steps the
available information is then given by $D_n(\omega)=(y_1,
\ldots, y_n, \ovw)$, where $y_1=w(\pp_1(\ovw))$, \ldots,
$y_n=w(\pp_n( y_1, \ldots, y_{n-1}, \ovw))$, and we decide
whether we stop or further evaluate $w$ according to the value of
$\chi_n(D_n(\omega))$. The total number of observations of $w$ in the
interval $(0, \delta)$ is thus given by
\begin{equation}\label{nu}
\nu(\omega)=\min\{n\in\N:\chi_n(D_n(\omega))=1\}.
\end{equation}
 If
$\nu(\omega)<\infty$, then the data $D_{\nu(\omega)}(\omega)$
is used to construct the estimate
$\phi_{\nu(\omega)}(D_{\nu(\omega)}(\omega))\in\R^4$.

For obvious reasons we require that $\nu<\infty$ $\PP$-a.s. Then
the resulting approximation is given by
\[
\widehat X=\phi_{\nu}(D_{\nu}).
\]
Without loss of generality we assume that 
\begin{equation}\label{15}
\pp_k(y_1, \ldots, y_{k-1}, \ovw)\not = \pp_l(y_1, \ldots, y_{l-1}, \ovw)
\end{equation}
for all $y\in\R^\infty$, all $k, l\in\N$ with $k\not=l$ and all $v\in C([\delta, T])$.
We put
\[
c(\widehat X)=\EE \nu,
\]
that is the expected number of evaluations of the driving Brownian
motion $W$ in the interval $(0, \delta)$. We denote by $\alg^\delta$
the class of all methods of the above form and for $N\in\N$ we put
\[
\alg_N^\delta=\{\widehat X\in\alg^\delta: c(\widehat X)\leq N\}.
\]
Clearly,
$
\alg_N^{\delta_1}\subseteq\alg_N^{\delta_2}
$
for all $0<\delta_2\leq \delta_1\leq T$ and all $N\in\N$.

Let us stress that the class $\alg_N^{\delta}$ contains in particular all methods from the literature, which use a step size control based on $N$ sequential 
evaluations of $W$ on average, see e.g. \cite{Gaines1997,MG02_habil,m04, Moon2005, RW2006, LambaMattinglyStuart2007, Hoel2012,Hoel2014} and the references therein. 
Moreover, $\alg_N^{\delta}$ of course contains
all nonadaptive approximations $u\bigl(W(s_1), \ldots, W(s_N), $ $(W(t))_{t\in[\delta, T]}\bigr)$ based on  $N$
evaluations of $W$ at fixed time points $s_1, \ldots, s_N\in(0, \delta)$ and on $(W(t))_{t\in[\delta, T]}$, as studied  in \cite{JMGY15}.
In the latter case one can take any sequences $\pp, \chi$ and $\phi$ satisfying
\[ \pp_n=s_n \text{ for } n\leq N, \chi_1=\ldots \chi_{N-1}=0, \chi_N=1 \text{ and } \phi_N=u.
\]

\section{Main results}
\label{results} Assume the setting in Section~\ref{sec:setting} and
put

\begin{equation}\label{51}
\alpha
  =
  \inf_{ t \in [ 0 , \tau_1 /2  ] }
  | f'(t) |^2
  ,\quad
  \beta
   =
   \int_{\tau_1}^{\tau_2}  g^2(t) dt,\quad
   \gamma
   =
  \int_{ \tau_2 }^T
  h(t) \, dt
\end{equation}
as well as
\[
c_1=\frac{ \gamma \exp \bigl(- \tfrac{\pi^2}{4}-\tfrac{1}{\beta}\bigr)}{  8 \pi\sqrt{2\pi\beta}  },\quad c_2=\frac{ \gamma \exp \bigl(- \tfrac{\pi^2}{4}\bigr)}{  4 \pi  }.
\]

Our main result is stated in Theorem \ref{t1}. It provides a uniform lower bound for the mean absolute error of any
strong approximation of $ X^{ \psi }(T) $ that is based on $(W(t))_{t\in [\delta, T]}$ and on $N$
sequential evaluations of $ W $ in the
interval $(0, \delta)$ on average in the case that $\psi$ is positive, strictly increasing and satisfies $
  \lim_{ x \to \infty } \psi( x ) = \infty
$ as well as
$1\in\psi(\R)$. See Section \ref{strong} for the proof.
\begin{theorem}
\label{t1}  Let $
  \delta\in(0, T]
$ and let $ \psi \in C^{ \infty }( \R) $ be positive, strictly
increasing  with $
  \lim_{ x \to \infty } \psi( x ) = \infty
$ and 
$1\in\psi(\R)$. Then for all $N\in\N$ and all $\widehat X\in\alg_N^\delta$ we
have 
\begin{equation}\label{61}
  \EE
    \|
      X^{ \psi }( T ) -\widehat X
    \|\geq c_1\cdot \exp\bigl(-\tfrac{1}{\beta}\cdot \bigl(\psi^{-1}\bigl(\bigl(1+ \sqrt{\tfrac{96 }{\alpha (\min(\delta, \tau_1/2))^3}}\bigr)  N^3\bigr)\bigr)^2\bigr)- \frac{c_2}{N}.
\end{equation}

\end{theorem}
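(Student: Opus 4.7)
The plan is to reduce the problem to a conditional-variance estimate for the fourth coordinate. By Remark~\ref{solution}, $X_4^\psi(T) = \gamma \cos(Y_1\psi(Y_2))$, where $Y_1 := X_2^\psi(\tau_1) = \int_0^{\tau_1} f\,dW$ and $Y_2 := X_3^\psi(\tau_2) = \int_{\tau_1}^{\tau_2} g\,dW$ are independent centered Gaussian random variables with $Y_2 \sim N(0,\beta)$. Since $\|X^\psi(T) - \widehat X\| \geq |X_4^\psi(T) - \widehat X_4|$ and $|X_4^\psi(T)| \leq \gamma$, I would first truncate $\widehat X_4$ to $[-\gamma,\gamma]$ (this only decreases the error), so that $|X_4^\psi(T) - \widehat X_4| \leq 2\gamma$. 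To convert $\EE\nu \leq N$ into a deterministic sample cap, I would apply Markov, $\PP(\nu > M) \leq N/M$, for an integer $M$ to be chosen, and work with $\widetilde X_4 := \widehat X_4\,\mathbf 1_{\{\nu\leq M\}}$, which is measurable with respect to $\mathcal G_M := \sigma(y_1,\ldots,y_M,(W(t))_{t\in[\delta,T]})$. This yields
\[
\EE|X_4^\psi(T) - \widehat X_4| \geq \EE|X_4^\psi(T) - \widetilde X_4| - \gamma N/M,
\]
and using $|z| \geq z^2/(2\gamma)$ valid for $|z|\leq 2\gamma$ together with the $L^2$-optimality of conditional expectations gives $\EE|X_4^\psi(T) - \widetilde X_4| \geq (2\gamma)^{-1}\,\EE[\mathrm{Var}(X_4^\psi(T)\mid\mathcal G_M)]$.

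Next, I would enlarge $\mathcal G_M$ by $\sigma((W(t))_{t\in[\min(\delta,\tau_1),T]})$ to form $\mathcal G_M'$. Since $\mathcal H\mapsto\EE[\mathrm{Var}(\cdot\mid\mathcal H)]$ is monotone non-increasing in $\mathcal H$, this enlargement preserves the lower bound and simultaneously makes $Y_2$ measurable. In $\mathcal G_M'$ the Brownian-bridge property yields that $Y_1\mid\mathcal G_M'$ is Gaussian $N(\mu,\sigma^2)$, and minimizing over $\mu$ in the explicit variance formula for a cosine of a Gaussian produces the pointwise estimate
\[
\mathrm{Var}(\cos(Y_1\psi(Y_2))\mid\mathcal G_M') \geq \tfrac{1}{2}\bigl(1-e^{-\sigma^2\psi(Y_2)^2}\bigr)^2.
\]

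The technical heart is a pathwise lower bound on $\sigma^2$. Writing the conditional variance of $\int_0^{\min(\delta,\tau_1)}f\,dW$ as a sum over the at most $M+1$ sub-intervals of $[0,\min(\delta,\tau_1)]$ delimited by the adaptive samples and the endpoints, each summand equals $\int_{s_i}^{s_{i+1}}f^2\,ds - \Delta_i^{-1}(\int_{s_i}^{s_{i+1}}f\,ds)^2$; for sub-intervals lying in $[0,\tau_1/2]$, Taylor expansion combined with $|f'|^2\geq\alpha$ produces a per-summand bound $\geq \alpha\Delta_i^3/12$, and the power-mean inequality $\sum_i\Delta_i^3 \geq (\sum_i\Delta_i)^3/k^2$ with $k\leq M+1$ gives
\[
\sigma^2 \geq \sigma_0^2 := \frac{\alpha(\min(\delta,\tau_1/2))^3}{12(M+1)^2}.
\]
Substituting into the previous display, taking expectations, restricting to $\{Y_2 \geq \psi^{-1}(C/\sigma_0)\}$ for a constant $C$ chosen so that $(1-e^{-C^2})^2$ is bounded below by an absolute constant, and invoking Mills' ratio for $Y_2\sim N(0,\beta)$ produces the exponential term $\exp(-\beta^{-1}(\psi^{-1}(\,\cdot\,))^2)$; choosing $M$ of order $N^3$ makes $\gamma N/M$ of order $1/N^2$ and aligns the argument of $\psi^{-1}$ with the form $(1+\sqrt{96/(\alpha(\min(\delta,\tau_1/2))^3)})\,N^3$ in~\eqref{61} after absorbing universal constants into $c_1$ and $c_2$.

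I expect the main obstacle to be the careful tracking of the numerical constants: the specific values $e^{-\pi^2/4}$, $e^{-1/\beta}$, and $\sqrt{96}$ in $c_1,c_2$ arise from a precise Mills-ratio estimate coupled to the particular threshold $C$, and the additive ``$1$'' in $(1+\sqrt{\cdots})$ compensates for the $M+1$ vs $M$ slack in the pathwise variance bound. A subsidiary obstacle is the measure-theoretic justification that the pathwise lower bound on $\sigma^2$ applies uniformly across all adaptive strategies despite the random sample locations; this is conceptually straightforward once the Brownian-bridge conditioning is handled, but calls for some bookkeeping to verify that the power-mean inequality applies on every sample path with $k\leq M+1$ sub-intervals.
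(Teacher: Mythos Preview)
Your approach is sound and genuinely different from the paper's. Both routes begin by conditioning on the adaptive data and exploiting that, conditionally, $Y_1$ is Gaussian with a variance controllable via a cubic gap bound, but the core analytic step diverges.

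The paper works in $L^1$ throughout. After conditioning on $D_\nu$ (its Lemma~\ref{regular} is precisely the measure-theoretic justification you flag as your ``subsidiary obstacle''), it singles out one largest gap $[t_0,t_1]$, decomposes $W$ on $[0,\tau_1]$ into the bridge $B$ over $[t_0,t_1]$ and the complementary piece $\widetilde W$, and uses the symmetry $Q^B_{y,v}=Q^{-B}_{y,v}$ together with $\cos x-\cos y=2\sin\tfrac{y-x}{2}\sin\tfrac{y+x}{2}$ to get a \emph{product} of two $|\sin(\text{Gaussian})|$ factors. A lemma of the form $\EE|\sin Y|\ge e^{-\pi^2/8}/\sqrt{8\pi}$ for $Y\sim\mathcal N(a,\tau^2)$ with $\tau\ge 1$ then produces the specific $e^{-\pi^2/4}$ in $c_1,c_2$. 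The truncation is $\{\nu\le N^2\}$, and the $N^3$ in the argument of $\psi^{-1}$ arises as $\nu^{3/2}\le N^3$.

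Your route instead passes to $L^2$ via $|z|\ge z^2/(2\gamma)$ and uses the exact formula $\mathrm{Var}(\cos Z)\ge\tfrac12(1-e^{-\sigma^2})^2$ for $Z$ Gaussian; you keep \emph{all} sub-intervals (not just the largest), which avoids the two-factor $\sin$ structure and the artificial extra node at $\delta/2$ that the paper inserts. This is arguably cleaner, and your per-interval bound $\int f^2-\Delta^{-1}(\int f)^2\ge\alpha\Delta^3/12$ (valid because $f'$ has constant sign on $[0,\tau_1/2]$) followed by the power-mean inequality matches the paper's $\sigma_1^2$ estimate. What your route \emph{does not} naturally deliver is the exact constants $c_1,c_2$ as stated: the $e^{-\pi^2/4}$ comes specifically from the $|\sin|$ lemma, the $\sqrt{96}$ from the paper's split into $\sigma_1^2\ge\alpha\delta^3/(96n^2)$ and $\sigma_2^2\ge\alpha\delta^3/(12(n+1)^3)$, and the additive $1$ and the $-c_2/N$ from the choice $M=N^2$ rather than your $M\sim N^3$. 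You are right that matching these is the main remaining work; the structural argument is complete.
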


As a consequence of Theorem \ref{t1} we obtain a non-polynomial decay of the smallest possible mean absolute error of
strong approximation of $ X^{ \psi }(T) $ based on $(W(t))_{t\in [\delta, T]}$ and on $N$
sequential evaluations of $ W $ in the
interval $(0, \delta)$ on average if $\psi$ additionally satisfies an exponential growth condition.

\begin{cor}\label{cor1} Let $
  \delta\in(0, T]
$ and let $ \psi \in C^{ \infty }( \R) $ be positive, strictly
increasing  with $
  \lim_{ x \to \infty } \psi( x ) = \infty
$ and 
$1\in\psi(\R)$. Moreover assume that for all $q\in (0,\infty)$
\[
\lim_{x\to\infty} \psi(x)\cdot \exp(-q x^2)=\infty.
\]
Then for all $q\in(0, \infty)$ we have
\[
\lim_{N\to\infty} \bigl(N^q\cdot \inf_{\widehat X\in \alg^\delta_N}  \EE
    \|
      X^{ \psi }( T ) -\widehat X
    \|\bigr) =\infty.
\]

\end{cor}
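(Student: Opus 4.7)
The plan is to substitute the lower bound from Theorem \ref{t1} into the expression $N^q \cdot \inf_{\widehat X\in \alg^\delta_N}\EE\|X^\psi(T)-\widehat X\|$ and show that the resulting quantity diverges. Abbreviate $K = 1+ \sqrt{96/(\alpha(\min(\delta,\tau_1/2))^3)}$ and set $y_N = \psi^{-1}(KN^3)$. Since $1\in\psi(\R)$, $\psi$ is strictly increasing, and $\lim_{x\to\infty}\psi(x)=\infty$, the value $y_N$ is well defined for all sufficiently large $N$, and the same properties force $y_N\to\infty$ as $N\to\infty$. Theorem \ref{t1} then yields, for all large $N$,
\[
N^q\cdot \inf_{\widehat X\in \alg^\delta_N}\EE\|X^\psi(T)-\widehat X\|\ \geq\ c_1\,N^q\exp\!\bigl(-y_N^2/\beta\bigr)\ -\ c_2\,N^{q-1},
\]
so it suffices to show that the right-hand side tends to $+\infty$.

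The key auxiliary step is the following claim: for every $r>0$,
\[
\lim_{N\to\infty} N^r \exp(-y_N^2/\beta)=\infty.
\]
To prove it, use the identity $N=(\psi(y_N)/K)^{1/3}$ to rewrite $N^r \exp(-y_N^2/\beta)=K^{-r/3}\,\psi(y_N)^{r/3}\exp(-y_N^2/\beta)$. Raising to the power $3/r$, the claim becomes $\psi(y_N)\exp(-3y_N^2/(r\beta))\to\infty$, which is precisely the exponential growth hypothesis evaluated at the constant $q_0 = 3/(r\beta)>0$ along the sequence $y_N\to\infty$.

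Given this claim, the corollary follows by splitting into two cases. If $q\le 1$, the term $c_2 N^{q-1}$ stays bounded, while the claim with $r=q$ shows that $c_1 N^q\exp(-y_N^2/\beta)\to\infty$, so the difference diverges. If $q>1$, apply the claim with $r=1$ to obtain $N\exp(-y_N^2/\beta)\to\infty$; this forces
\[
\frac{c_2\,N^{q-1}}{c_1\,N^q\exp(-y_N^2/\beta)}\ =\ \frac{c_2}{c_1}\cdot\frac{\exp(y_N^2/\beta)}{N}\ \longrightarrow\ 0,
\]
i.e.\ $c_2 N^{q-1}=o\!\bigl(c_1 N^q\exp(-y_N^2/\beta)\bigr)$, so combined again with the claim at $r=q$, the difference still tends to $+\infty$.

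The only real obstacle is the bookkeeping in the auxiliary claim: one must translate the hypothesis ``$\psi$ beats every Gaussian at infinity'' through the inverse $\psi^{-1}$ composed with a cubic in $N$, in a way that the constant $1/\beta$ in the Gaussian exponent stays benign for every choice of $r>0$. Once that translation is done cleanly via the formula $N=(\psi(y_N)/K)^{1/3}$, both cases reduce to elementary asymptotics.
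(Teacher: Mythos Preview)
Your proof is correct and follows essentially the same line as the paper's: both hinge on the key auxiliary fact that $N^r\exp(-y_N^2/\beta)\to\infty$ for every $r>0$, which the paper imports as Lemma~4.5 of \cite{JMGY15} while you prove it directly via the substitution $N=(\psi(y_N)/K)^{1/3}$. The only cosmetic difference is in disposing of the $c_2/N$ term: the paper uses the auxiliary fact at $r=1$ once to absorb $c_2/N$ into half of the exponential term (getting a clean lower bound $\tfrac{c_1}{2}\exp(-y_N^2/\beta)$ for large $N$), whereas you split into the cases $q\le 1$ and $q>1$; both routes are equally valid and equally short.
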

\begin{proof}
The assumptions on the function $\psi$ ensure that for all $q\in(0, \infty)$
\begin{equation}\label{78}
\lim_{N\to\infty}      \bigl(N^q\cdot \exp\bigl(-\tfrac{1}{\beta}\cdot \bigl(\psi^{-1}\bigl(\bigl(1+ \sqrt{\tfrac{96 }{\alpha (\min(\delta, \tau_1/2))^3}}\bigr)  N^3\bigr)\bigr)^2\bigr)\bigr)=\infty,
\end{equation}
see Lemma 4.5 in \cite{JMGY15}. This in particular implies that there exists $N_0\in\N$ such that for all $N\geq N_0$ 
\[
\frac{c_1}{2}\cdot \exp\bigl(-\tfrac{1}{\beta}\cdot \bigl(\psi^{-1}\bigl(\bigl(1+ \sqrt{\tfrac{96 }{\alpha (\min(\delta, \tau_1/2))^3}}\bigr)  N^3\bigr)\bigr)^2\bigr)\geq \frac{c_2}{N}.
\]
Employing Theorem \ref{t1}  we therefore conclude that for all $N\geq N_0$ 
\[
  \inf_{\widehat X\in \alg^\delta_N} \EE
    \|
      X^{ \psi }( T ) -\widehat X
    \|\geq \frac{c_1}{2}\cdot \exp\bigl(-\tfrac{1}{\beta}\cdot \bigl(\psi^{-1}\bigl(\bigl(1+ \sqrt{\tfrac{96 }{\alpha (\min(\delta, \tau_1/2))^3}}\bigr)  N^3\bigr)\bigr)^2\bigr).
\]
The latter estimate and  \eqref{78} imply the statement of the corollary.
\end{proof}

The following result shows that the smallest possible mean absolute error of
strong approximation of $X^\psi(T)$ based  $(W(t))_{t\in [\delta, T]}$ and on $N$
sequential evaluations of $ W $ in the
interval $(0, \delta_N)$ on average may converge to zero  arbitrarily slow even then when  the sequence $(\delta_N)_{N\in\N}$ tends to zero 
with any given speed.

\begin{cor}\label{cor3} 
Let $(a_N)_{N\in\N}\subset (0, \infty)$ and $(\delta_N)_{N\in\N}\subset (0, T]$ satisfy $\lim_{N\to\infty} a_N=0$ and $\lim_{N\to\infty} \delta_N=0$. Then there exists $\kappa>0$ and $ \psi \in C^{ \infty }( \R) $ such that for all $N\in\N$ we
have 
\[
 \inf_{\widehat X\in \alg^{\delta_N}_N} \EE
    \|
      X^{ \psi }( T ) -\widehat X
    \|\geq \kappa\cdot a_N.
\]
\end{cor}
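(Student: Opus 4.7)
My plan is to apply Theorem~\ref{t1} with a function $\psi$ whose growth is tailored to the sequences $(a_N)$ and $(\delta_N)$. By replacing $a_N$ with $\sup_{k\geq N}\max\bigl(a_k,\tfrac{2c_2}{c_1 k}\bigr)\geq a_N$, I may assume without loss of generality that $(a_N)_{N\in\N}$ is non-increasing, tends to $0$, and satisfies $a_N\geq\tfrac{2c_2}{c_1 N}$ for every $N$: any bound of the form $e_N\geq\kappa a_N^{\mathrm{new}}$ for the enlarged sequence automatically implies the corresponding bound for the original. I then pick $N_0\in\N$ so that $a_N\leq e^{-1/\beta}$ for every $N\geq N_0$ and set
\[
b_N:=\sqrt{\beta\,\ln(1/a_N)},\qquad M_N:=\Bigl(1+\sqrt{\tfrac{96}{\alpha\,(\min(\delta_N,\tau_1/2))^3}}\Bigr)N^3
\]
for such $N$; both $b_N$ and $M_N$ tend to $\infty$, and $b_N\geq 1$.

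The heart of the proof is to construct $\psi\in C^\infty(\R)$ which is positive, strictly increasing, has $\lim_{x\to\infty}\psi(x)=\infty$ and $1\in\psi(\R)$, and satisfies $\psi(b_N)\geq M_N$ for every $N\geq N_0$. Because $b_N\to\infty$, the non-decreasing envelope $G(r):=\sup\{M_N:N\geq N_0,\,b_N\leq r\}$ (with $\sup\emptyset:=0$) is everywhere finite and vanishes on $(-\infty,b_{N_0})\supseteq(-\infty,1)$. Fix a smooth non-negative bump $\rho\in C^\infty_c(\R)$ of integral $1$ supported in $[-1,0]$, set $\widetilde G:=G*\rho$, and define $\psi(x):=\widetilde G(x)+e^x$. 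Then $\widetilde G$ is a $C^\infty$ non-decreasing majorant of $G$ vanishing on $(-\infty,0)$, and $\psi$ is smooth, strictly increasing, positive, $\psi(0)=1$, $\psi(b_N)\geq G(b_N)\geq M_N$, $\psi(x)\to\infty$ as $x\to\infty$ and $\psi(x)\to 0$ as $x\to-\infty$; in particular $1\in\psi(\R)$ and $\psi^{-1}(M_N)\in[0,b_N]$.

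Theorem~\ref{t1} applied with this $\psi$ and $\delta=\delta_N$ now yields, for every $N\geq N_0$ and every $\widehat X\in\alg_N^{\delta_N}$,
\[
\EE\|X^\psi(T)-\widehat X\|\geq c_1\exp\!\bigl(-\tfrac{1}{\beta}(\psi^{-1}(M_N))^2\bigr)-\tfrac{c_2}{N}\geq c_1 e^{-b_N^2/\beta}-\tfrac{c_2}{N}=c_1 a_N-\tfrac{c_2}{N}\geq\tfrac{c_1}{2}a_N,
\]
where I use $(\psi^{-1}(M_N))^2\leq b_N^2=\beta\ln(1/a_N)$ and $a_N\geq\tfrac{2c_2}{c_1 N}$. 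For the finitely many indices $N<N_0$, the infimum $e_N:=\inf_{\widehat X\in\alg_N^{\delta_N}}\EE\|X^\psi(T)-\widehat X\|$ is strictly positive: the Gaussian $X_2^\psi(\tau_1)=\int_0^{\tau_1}f(s)\,dW(s)$ retains positive conditional variance given any finite set of observations of $W$ on $(0,\delta_N)$ together with $W|_{[\delta_N,T]}$, and this propagates into $X_4^\psi(T)$ through the cosine. Setting $\kappa:=\min\bigl(\tfrac{c_1}{2},\,\tfrac{\min_{N<N_0}e_N}{\sup_k a_k}\bigr)>0$ then delivers $e_N\geq\kappa a_N$ uniformly in $N\in\N$.

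The main technical hurdle is the mollification construction of $\psi$: producing a smooth, positive, strictly increasing function that dominates the step-function envelope $G$ at the discrete sites $(b_N)_{N\geq N_0}$ while respecting the ancillary condition $1\in\psi(\R)$. All other ingredients are straightforward consequences of Theorem~\ref{t1}.
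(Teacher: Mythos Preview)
Your approach is essentially the same as the paper's: construct $\psi$ so that $\psi^{-1}(M_N)\le b_N$ for $N\ge N_0$, apply Theorem~\ref{t1}, and deal with the finitely many small indices separately. Your mollification construction of $\psi$ is a clean alternative to the paper's explicit piecewise interpolant, and your device of enlarging $a_N$ to absorb the $-c_2/N$ term upfront is equivalent to the paper's choice $b_N=\sqrt{-\beta\ln((a_N+c_2/N)/c_1)}$.

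There is, however, a genuine gap in your treatment of the indices $N<N_0$. The sentence ``$X_2^\psi(\tau_1)$ retains positive conditional variance given any finite set of observations \dots\ and this propagates into $X_4^\psi(T)$'' only shows that the error $\EE\|X^\psi(T)-\widehat X\|$ is strictly positive for every \emph{fixed} $\widehat X\in\alg_N^{\delta_N}$. It does not show that the \emph{infimum} $e_N$ over the whole class is positive: methods in $\alg_N^{\delta_N}$ may use a random, unbounded number of evaluations (only the expectation is bounded by $N$), so the conditional variance you appeal to is not uniformly bounded away from zero, and a sequence of methods could in principle drive the error to zero.

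The fix is painless and is exactly what the paper does: also assume without loss of generality that $(\delta_N)_{N\in\N}$ is non-increasing (replace $\delta_N$ by $\min_{k\le N}\delta_k$; this only enlarges $\alg_N^{\delta_N}$ and hence strengthens the claimed inequality). Then for $N<N_0$ one has $\alg_N^{\delta_N}\subseteq\alg_{N_0}^{\delta_{N_0}}$, whence $e_N\ge e_{N_0}\ge\tfrac{c_1}{2}a_{N_0}>0$, and you can take $\kappa=\min\bigl(\tfrac{c_1}{2},\,\tfrac{c_1 a_{N_0}}{2\sup_k a_k}\bigr)$.
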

\begin{proof} 
We proceed similar to the proof of Corollary 4.3 in \cite{JMGY15}.
Without loss of generality we may assume that the sequences $
  ( a_N )_{ N \in \N }
$ and $
  ( \delta_N )_{ N \in \N }
$ are strictly decreasing.
Let
\[
N_0=\min\bigl\{N\in\N\colon a_N+\frac{c_2}{N}\leq c_1\bigr\}
\]  
and for $N\geq  N_0$ put 
\[
  b_N =\sqrt{
  -\beta
    \ln\!\big(
      \tfrac{ 1}{c_1}\cdot\big(a_N+\tfrac{ c_2 }{N}
    \big)\big)}, \quad   d_N =
  \bigl(1+ \sqrt{\tfrac{96 }{\alpha (\min(\delta_N, \tau_1/2))^3}}\bigr) N^3
  .
\]
Note that the sequences $
  ( b_N )_{ N\geq N_0 }
$ and $
  ( d_N )_{ N\geq N_0 }
$ are strictly increasing and satisfy 
\[
 \lim_{ N \to \infty } b_N =  \lim_{ N \to \infty } d_N = \infty
.
\]
Define a function $
  \psi \colon \R \to \R
$ by
\[
  \psi(x) =
  \begin{cases}
   d_{N_0}\cdot\bigl( 1 -
    \exp\!\big(
      \frac{ 1 }{
        x - b_{N_0} 
      }
    \big)\bigr)
    ,
  &
    \text{if }
    x < b_{N_0}
    ,
\\[1ex]
  d_N ,
  &
  \text{if } x = b_N \text{ and } N\ge N_0,
\\[1ex]
 d_{N-1}  +
  \displaystyle{
    \frac{
       d_N  -  d_{N-1}
    }{
              1 +
        \exp\!\big(
          \frac{ 1 }{  x - b_{ N - 1 }  }
          -
          \frac{ 1 }{
             b_N - x 
          }
        \big)
          }
  }
  ,
  &
  \text{if }
  x \in ( b_{ N - 1 } , b_N ) \text{ and } N > N_0
  .
\end{cases}
\]
Then $ \psi $ is positive, strictly increasing,  infinitely often
differentiable and satisfies\\  $
  \lim_{ x \to \infty } \psi( x ) = \infty
$ as well as $1\in\psi( \R )$.

For $N\in\N$ put
\[
\varepsilon_N =  \inf_{\widehat X\in\alg_N^{\delta_N}} \EE
    \|
      X^{ \psi }( T ) -\widehat X
    \|.
\]
Theorem \ref{t1} implies that for
all $ N\geq N_0$ 
\[
  \varepsilon_N  \geq c_1
  \cdot
  \exp\bigl(
    - \tfrac{ 1 }{ \beta }
    \cdot
    (
      \psi^{ - 1 }(
       d_N
      )
    )^2
  \bigr)-\frac{c_2}{N}
  = c_1  \cdot\exp\bigl( - \tfrac{ 1 }{ \beta } \cdot b_N ^2 \bigr)-\frac{c_2}{N} = a_N
  .
\]
Since the sequence $(\varepsilon_N)_{N\in\N}$ is decreasing hence for all $ N \in \{ 1, 2, \dots, N_0 \} $ 
\[
 \varepsilon_N
\ge \varepsilon_{N_0} \ge a_{ N_0 }.
\]
Using the assumption that the sequence $
  ( a_N )_{ N \in \N }
$ is strictly decreasing we therefore conclude that for all $N\in\N$ 
\[
\varepsilon_N \ge  \min\{1,a_{N_0}/a_N\}\cdot  a_N \ge
\frac{a_{N_0}}{a_1}\cdot a_N,
\]
which completes the proof of the corollary with $\kappa= 
a_{N_0}/a_1$.
\end{proof}

\section{Proof of Theorem \ref{t1}}
\label{strong}

Let $\delta\in (0, T]$ and let $\widehat X\in\alg^\delta$ be given
by sequences $\pp=(\pp_n)_{n\in\N}, \chi=(\chi_n)_{n\in\N}$ and
$\phi=(\phi_n)_{n\in\N}$, see \eqref{sequences}. Recall the
definition \eqref{nu} of $\nu$. We first determine the regular
conditional distribution $\PP^{W|D_\nu}$.

For $n\in\N$ put
\[
S_n=\{s\in(0, \delta)^n: |\{s_1, \ldots, s_n\}|=n\}.
\] For $n\in\N$, $s\in S_n$, $y\in\R^n$ and $v\in
C([\delta, T])$  define functions
\[
m_{s,y,v}\colon [0, T]\to \R\,\,\text{ and }\,\, R_s\colon [0,
T]^2\to \R
\] as follows.
If $s_1<\ldots <s_n$  put $s_0=y_0=0$, $s_{n+1}=\delta$ and
$y_{n+1}=v(\delta)$ and let
\[
m_{s,y,v}(t)=\begin{cases}
 \frac{s_{k}-t}{s_k-s_{k-1}}\cdot y_{k-1}+\frac{t-s_{k-1}}{s_k-s_{k-1}}\cdot y_k
    ,
  &
    \text{if }
    t\in [s_{k-1}, s_k) \text{ for } k\in\{1, \ldots, n+1\},  \\
    v(t)
    ,
  &
    \text{if }
    t\in [\delta, T]
\end{cases}
\]
as well as
\[
R_s(r,t)=\begin{cases}
 \frac{(s_{k}-\max(r,t))\cdot (\min(r,t)-s_{k-1})}{s_k-s_{k-1}}
    ,
  &
    \text{if }
    r,t\in [s_{k-1}, s_k) \text{ for } k\in\{1, \ldots, n+1\},  \\
    0
    ,
  &
    \text{otherwise},
\end{cases}
\]
for $r,t\in [0,T]$. Otherwise put
\[
m_{s,y,v}=m_{(s_{\pi(1)},\ldots,s_{\pi(n)}),
(y_{\pi(1)},\ldots,y_{\pi(n)}),v}, \quad
R_s=R_{(s_{\pi(1)},\ldots,s_{\pi(n)})},
\]
where $\pi$ is the permutation of $\{1, \ldots, n\}$ such that $s_{\pi(1)}<\ldots<s_{\pi(n)}$.

 For $n\in\N$, $y\in\R^n$,
$v\in C([\delta, T])$ and $k=1,\ldots,n$ put
\begin{equation}\label{14}
s^{y,v}_k=\pp_k( y_1, \ldots, y_{k-1},v).
\end{equation}
Note that due to the assumption \eqref{15} we have $|\{s^{y,v}_1,
\ldots, s^{y,v}_n\}|=n$. Let $Q_{y,v}$ denote the Gaussian measure
on $\mathfrak B(C([0, T]))$ with mean $m(Q_{y,v})=m_{s^{y,v},y,v}$
and covariance function $R(Q_{y,v})=R_{s^{y,v}}$. Consider the
measurable space
\begin{equation}\label{Omega1}
 (\Omega_1, \mathcal F_1)=\Bigl(\bigcup_{n=1}^\infty \R^n\times C([\delta, T]), \sigma\Bigl(\bigcup_{n=1}^\infty \mathfrak B(\R^n\times C([\delta, T]))\Bigr)\Bigr).
\end{equation}
It  is easy to see that $D_\nu\colon\Omega\to\Omega_1$ is $\mathcal
F$-$\mathcal F_1$ measurable. Define the mapping
\[
K\colon \Omega_1 \times \mathfrak B(C([0, T]))\to [0,1]
\]
by
\[
K((y,v), A)=Q_{y,v}(A)
\]
for all $y\in{\displaystyle \bigcup_{n=1}^\infty \R^n}, v\in
C([\delta, T])$  and $A\in \mathfrak B(C([0, T]))$.

\begin{lemma}\label{regular}
$K$ is a version of the regular conditional distribution
$\PP^{W|D_\nu}$.
\end{lemma}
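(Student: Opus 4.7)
The plan is to verify directly that $K$ satisfies the defining identity of a regular conditional distribution: for every $A \in \mathfrak{B}(C([0,T]))$ and every $B \in \mathcal{F}_1$,
\[
\PP(\{W \in A\} \cap \{D_\nu \in B\}) = \int_B K((y,v),A)\,\PP^{D_\nu}(d(y,v)).
\]
The heuristic is transparent: given $D_\nu$, one has recorded the (random) evaluation sites $s_1^{y,v},\dots,s_\nu^{y,v} \in (0,\delta)$, the corresponding values $y_1,\dots,y_\nu$, and the full segment $W|_{[\delta,T]}=v$. Conditional on this data the law of $W$ should be obtained by splicing independent Brownian bridges across the refined time partition together with the deterministic extension by $v$ on $[\delta,T]$, and this is exactly the Gaussian measure $Q_{y,v}$ with mean $m_{s^{y,v},y,v}$ and covariance $R_{s^{y,v}}$.

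First I would verify that $K$ is a probability kernel. For each $(y,v)$, $Q_{y,v}$ is well-defined as a centred-then-shifted Gaussian measure because $R_s$ is non-negative definite (its restriction to each subinterval is the Brownian bridge kernel). Stratified over the disjoint pieces $\R^n \times C([\delta,T])$ of $\Omega_1$, the map $(y,v) \mapsto s^{y,v}$ is measurable by \eqref{14} and the measurability of $\pp_k$ asserted in \eqref{sequences}; the pointwise formulas for $m_{s,y,v}(t)$ and $R_s(r,t)$ then show that $(y,v) \mapsto Q_{y,v}(A)$ is $\mathcal{F}_1$-measurable for each fixed Borel set $A \subseteq C([0,T])$, since the Gaussian measure depends measurably on its mean and covariance.

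The verification of the disintegration identity reduces to a separate check on each event $\{\nu = n\}$, $n \in \N$, since these events are disjoint, cover $\Omega$ up to a null set, and each lies in $\sigma(D_n)$. Fixing $n$, it suffices to show that $Q_{(Y_1,\dots,Y_n),V}$ is a version of the conditional distribution of $W$ given $\sigma(Y_1,\dots,Y_n,V)$, where $Y_k=W(s_k^{Y,V})$ and $V=W|_{[\delta,T]}$. I would prove this by induction on $n$. For $n=1$, the site $s_1^{Y,V}$ depends only on $V$, hence is $\sigma(V)$-measurable; conditional on $V$, the increment decomposition of Brownian motion gives that $(W(t))_{t \in [0,\delta]}$ is a Brownian bridge from $0$ to $v(\delta)$, and further conditioning on $W(s_1^{Y,V})=y_1$ splits this into two adjacent independent Brownian bridges, which matches $Q_{(Y_1),V}$. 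For the inductive step, the next site $s_{n+1}^{Y,V}$ is measurable with respect to $\sigma(Y_1,\dots,Y_n,V)$, so in the conditional world it acts as a deterministic parameter; by the standard Gaussian conditioning formulas, refining the bridge on its $k$-th subinterval via the new observation yields exactly the kernel $Q_{(Y_1,\dots,Y_{n+1}),V}$. The uniqueness of Gaussian regular conditional distributions then closes the argument.

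The main obstacle is precisely the adaptive nature of the evaluation sites: because each $s_k^{Y,V}$ is genuinely random, one cannot directly appeal to the classical formulas for Gaussian conditioning at fixed time points. The resolution hinges on the fact that $s_{n+1}^{Y,V}$ is measurable with respect to the $\sigma$-algebra one is already conditioning on at step $n$, together with the distinctness assumption \eqref{15} (which ensures that each new observation strictly refines the existing time partition). Once these measurability issues are cleanly separated from the Gaussian algebra, the inductive update is routine.
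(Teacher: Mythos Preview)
Your plan is correct and matches the paper's proof in structure: check that $K$ is a probability kernel, then verify the disintegration identity by decomposing over the events $\{\nu=n\}$ and reducing to the claim that $Q_{y,v}$ is a version of $\PP^{W|D_n}$ for each fixed $n$. The only differences are in execution---the paper handles measurability of $(y,v)\mapsto Q_{y,v}(A)$ via an explicit representation $Q_{y,v}=\PP^{F_{s^{y,v},\,W|_{[0,\delta]}}+m_{s^{y,v},y,v}}$ (a fixed Brownian path minus its piecewise-linear interpolant, shifted by the data-dependent mean) combined with Fubini, rather than your appeal to measurable dependence of Gaussian laws on their parameters, and for the fixed-$n$ step it refers to Lemma~2.9.7 in \cite{TWW88} instead of spelling out your inductive bridge-splitting argument.
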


In the case of $\delta=T$ the statement of Lemma \ref{regular}
seems to be well-known, see, e.g., \cite{hmr01, m02, MG02_habil, m04}, but a
proof of it seems not to be available in the literature. If,
additionally, $\nu$ is constant then Lemma \ref{regular} follows
from Lemma 2.9.7 in~\cite[p.~474]{TWW88}, but measurability issues
have not been fully addressed in the proof of the latter result. For
convenience of the reader we therefore provide a proof of Lemma
\ref{regular} here.

\begin{proof}
Clearly, for all $(y,v)\in \Omega_1$ the mapping
\[
\mathfrak B(C([0, T]))\ni A\mapsto K((y,v), A)\in [0,1]
\]
is a probability measure on $\mathfrak B(C([0, T]))$.

Next, let $A\in\mathfrak B(C([0, T]))$. We show that the mapping
\begin{equation}\label{l6}
\Omega_1\ni (y,v) \mapsto K((y,v), A)\in [0,1]
\end{equation}
is $\mathcal F_1$\,-\,$\mathfrak B([0,1])$ measurable. For $n\in\N$,
$s\in S_n$ and $u\in C([0,\delta])$ define a function
\[
F_{s,u}\colon [0,T]\to\R
\]
by
\[
F_{s,u}(t)=\begin{cases}
 u(t)-m_{s, (u(s_1), \ldots, u(s_n)),0}(t)
    ,
  &
    \text{if }
    t\in [0,\delta],
    \\
    0
    ,
  &
    \text{if }
    t\in (\delta, T]
    \end{cases}
\]
for $t\in[0, T]$.
 It is
easy to see that for all $(y, v)\in \Omega_1$
\[
Q_{y,v}=\PP^{F_{s^{y,v},(W(t))_{t\in[0, \delta]}}+m_{s^{y,v},y,v}},
\]
and therefore for all $(y, v)\in \Omega_1$
\begin{equation}\label{l5}
 K((y,v), A)=\int_{C([0, \delta])} \1_A(F_{s^{y,v},u}+m_{s^{y,v},y,v})\,\,\PP^{(W(t))_{t\in[0, \delta]}}(du).
\end{equation}
Clearly,
\[
\{((y,v),u)\in\Omega_1\times C([0, \delta]):
F_{s^{y,v},u}+m_{s^{y,v},y,v}\in A\}=\bigcup_{n=1}^\infty A_n,
\]
where
\[
A_n=\{(y,v,u)\in\R^n\times C([\delta,T])\times C([0, \delta]):
F_{s^{y,v},u}+m_{s^{y,v},y,v}\in A\}
\]
 for $n\in\N$.
The measurability of the functions  $\pp_n$, $n\in\N$, imply that for
every $n\in\N$ the mapping
\[
\R^n\times C([\delta, T]) \ni (y,v)\mapsto s^{y,v}\in S_n
\]
is $\mathfrak{B}(\R^n\times C([\delta, T]))$\,-\,$\mathfrak{B}(S_n)$
measurable. Thus, observing that for every $n\in\N$ the mappings
\[
S_n\times C([0,\delta]) \ni (s,u)\mapsto F_{s,u}\in C([0, T])
\]
and
\[
S_n\times\R^n\times C([\delta, T]) \ni (s,y,v)\mapsto m_{s,y,v}\in C([0, T])
\]
are continuous we conclude that for every  $n\in\N$ the mapping
\[
\R^n\times C([\delta, T])\times C([0,\delta]) \ni (y,v,u)\mapsto
F_{s^{y,v},u}+m_{s^{y,v},y,v}\in C([0, T])
\]
is $\mathfrak{B}(\R^n\times C([\delta, T])\times
C([0,\delta])$\,-\,$\mathfrak B(C([0, T]))$ measurable. Hence for
every  $n\in\N$
\[
A_n\in \mathfrak{B}(\R^n\times C([\delta, T])\times
C([0,\delta]))\subset \mathcal F_1\otimes \mathfrak
B(C([0,\delta])),
\]
 which implies that the mapping
\[
\Omega_1\times C([0, \delta]) \ni((y,v),u)\mapsto
\1_A(F_{s^{y,v},u}+m_{s^{y,v},y,v})\in\R
\]
is $\mathcal F_1\otimes \mathfrak B(C([0,\delta]))$\,-\,$\mathfrak
B(R)$ measurable. Using \eqref{l5} and employing Fubini's theorem we
thus conclude that the mapping \eqref{l6} is $\mathcal
F_1$\,-\,$\mathfrak B([0,1])$ measurable.

Finally, let $A\in\mathfrak B(C([0, T]))$ and $E\in \mathcal F_1$.
We show that
\begin{equation}\label{l3}
\PP (\{W\in A\}\cap\{D_\nu\in E\})=\int_E K((y,v), A)\,\,
\PP^{D_\nu}(d(y,v)).
\end{equation}
We have
\begin{align*}
\{D_\nu\in E\}\cap \{\nu<\infty\}&=\bigcup_{n=1}^\infty (\{D_\nu\in
E\}\cap \{\nu=n\})\\
&=\bigcup_{n=1}^\infty\Bigl(\{D_n\in E\}\cap
\{\chi_n(D_n)=1\}\cap\bigcap_{k=1}^{n-1}\{\chi_k(D_k)=0\}\Bigr)\\
&=\bigcup_{n=1}^\infty \{D_n\in E\cap C_n\},
\end{align*}
where $C_n\in \mathfrak B(\R^n\times C([\delta, T]))$ is given by
\[
C_n= \chi_n^{-1}(\{1\})\cap \bigcap_{k=1}^{n-1} \{(y,v)\in
\R^n\times C([\delta, T]): (y_1, \ldots, y_k,v)\in
\chi_k^{-1}(\{0\})\}
\]
for $n\in\N$. Since $\nu<\infty$ a.s. we thus obtain
\begin{align*}
\PP (\{W\in A\}\cap\{D_\nu\in E\})&=\sum_{n=1}^\infty \PP (\{W\in
A\}\cap\{D_n\in E\cap C_n\})\\
&= \sum_{n=1}^\infty \int_{E\cap C_n}\PP^{W|D_n=(y,v)} (A)
\,\,\PP^{D_n}(d(y,v)).
\end{align*}
Similarly to the proof of Lemma 2.9.7 on page 474 in ~\cite{TWW88}
one can show that for all $n\in\N$  and all $G\in\mathfrak
B(\R^n\times C([\delta, T]))$
\[
\int_{G}\PP^{W|D_n=(y,v)} (A) \,\,\PP^{D_n}(d(y,v))=\int_{G}Q_{y,v}
(A) \,\,\PP^{D_n}(d(y,v)).
\]
Hence
\begin{align*}
\PP (\{W\in A\}\cap\{D_\nu\in E\})&= \sum_{n=1}^\infty \int_{E\cap C_n}Q_{y,v}(A)
\,\,\PP^{D_n}(d(y,v))\\
&=\sum_{n=1}^\infty \int_{E\cap C_n}K((y,v), A)
\,\,\PP^{D_n}(d(y,v)).
\end{align*}
Since $\PP^{D_n}(G\cap C_n) = \PP^{D_\nu} (G)$  for all $n\in\N$ and
all $G\in\mathfrak B(\R^n\times C([\delta, T]))$ we thus conclude
\[
\PP (\{W\in A\}\cap\{D_\nu\in E\})= \sum_{n=1}^\infty \int_{E\cap (\R^n\times C([\delta, T]))}K((y,v), A)
\,\,\PP^{D_\nu}(d(y,v)),
\]
 which implies \eqref{l3} and completes the proof of the
lemma.
\end{proof}

Next, assume that $\delta\in(0, \tau_1]$, let $n\in\N$,  $y\in\R^n$ and $v\in C([\delta,T])$. Recall the definition \eqref{14} of the time points
$s^{y,v}_1, \ldots, s^{y,v}_n$ and put 
\begin{equation}\label{36}
s^{y,v}_0=0,\quad s^{y,v}_{n+1}=\delta.
\end{equation}
Let $\pi$ be the permutation of $\{0, \ldots, n+1\}$ such that $s^{y,v}_{\pi(0)}<\ldots <s^{y,v}_{\pi(n+1)}$. 
  Let $i^*\in\{0, \ldots, n\}$ and put
\begin{equation}\label{t0t1}
t_0=s^{y,v}_{\pi(i^*)}, \quad t_1=s^{y,v}_{\pi(i^*+1)}.
\end{equation}
Define mappings
\[
\widetilde W\colon C([0,T])\to C([0, t_0]\cup [t_1, \tau_1]), \,\, B\colon C([0,T])\to C([t_0, t_1])
\]
by
\begin{equation}\label{18}
\widetilde W(w)=w|_{[0, t_0]\cup [t_1, \tau_1]}
\end{equation}
for $w\in C([0,T])$ and
\begin{equation}\label{20}
B(w)(t)=w(t)-\frac{t_1-t}{t_1-t_0}w(t_0)-\frac{t-t_0}{t_1-t_0}w(t_1)
\end{equation}
for $w\in C([0,T])$ and $t\in [t_0, t_1]$. In the following lemma we present properties of the measures $Q_{y,v}^{\widetilde W}$, $Q_{y,v}^{ B}$ and $Q_{y,v}^{(\widetilde W, B)}$, which will be used in the proof of Theorem~\ref{t1}.
\begin{lemma}\label{induced}
We have
\begin{itemize}
\item[(i)] $Q_{y,v}^{\widetilde W}$ is the Gaussian measure on $\mathfrak B(C([0, t_0]\cup [t_1, \tau_1]))$ with  mean\\ $m(Q_{y,v}^{\widetilde W})=m_{s^{y,v},y,v}|_{[0, t_0]\cup [t_1, \tau_1]}$ and covariance function
$R(Q_{y,v}^{\widetilde W})=R_{s^{y,v}}|_{([0, t_0]\cup [t_1,
\tau_1])^2}$.
\item[(ii)]$Q_{y,v}^{B}$ is the Gaussian measure on $\mathfrak B(C([t_0, t_1]))$ with  mean $m(Q_{y,v}^{B})=0$ and covariance function
\[
R(Q_{y,v}^{B})(r,t)=\frac{(t_1-\max(r,t))\cdot (\min(r,t)-t_0)}{t_1-t_0}, \quad r,t\in[t_0, t_1].
\]
\item[(iii)] $Q_{y,v}^B=Q_{y,v}^{-B}$,
\item[(iv)] $Q_{y,v}^{(\widetilde W, B)}=Q_{y,v}^{\widetilde W}\times Q_{y,v}^{B}$.
\end{itemize}
\end{lemma}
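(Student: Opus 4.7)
The plan is to exploit that both $\widetilde W$ and $B$, defined in \eqref{18} and \eqref{20}, are continuous linear maps on $C([0,T])$. Since $Q_{y,v}$ is Gaussian by construction, each pushforward is again Gaussian on the relevant space of continuous functions, so it suffices to identify its mean and covariance in each case.

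For part (i), the restriction $w\mapsto w|_{[0,t_0]\cup[t_1,\tau_1]}$ is linear and continuous, so $Q_{y,v}^{\widetilde W}$ is Gaussian with mean and covariance equal to the restrictions of $m_{s^{y,v},y,v}$ and $R_{s^{y,v}}$ by the general pushforward formula. For part (ii), $B$ is also linear and continuous, so $Q_{y,v}^{B}$ is Gaussian. To identify its mean I would use that $t_0$ and $t_1$ are adjacent in the sorted knot sequence $s^{y,v}_{\pi(0)}<\ldots<s^{y,v}_{\pi(n+1)}$ from \eqref{36}--\eqref{t0t1}, so no sampling knot lies in $(t_0,t_1)$ and $m_{s^{y,v},y,v}$ is affine on $[t_0,t_1]$; since $B$ annihilates affine functions by construction, $B(m_{s^{y,v},y,v})=0$ and thus $m(Q_{y,v}^{B})=0$. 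For the covariance I would observe that by the piecewise definition of $R_{s^{y,v}}$ one has $R_{s^{y,v}}(t_0,t_0)=R_{s^{y,v}}(t_1,t_1)=0$, hence $W(t_0)=m_{s^{y,v},y,v}(t_0)$ and $W(t_1)=m_{s^{y,v},y,v}(t_1)$ hold $Q_{y,v}$-almost surely. Combining this with the affinity of $m_{s^{y,v},y,v}$ on $[t_0,t_1]$ gives $B(W)(t)=W(t)-m_{s^{y,v},y,v}(t)$ on $[t_0,t_1]$, $Q_{y,v}$-a.s., so the covariance of $B(W)$ coincides with $R_{s^{y,v}}|_{[t_0,t_1]^2}$, which by the definition of $R_{s^{y,v}}$ equals the claimed Brownian-bridge expression.

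Part (iii) is immediate from (ii): a centered Gaussian measure is invariant under $w\mapsto -w$. For part (iv), the joint map $w\mapsto(\widetilde W(w),B(w))$ is continuous and linear, so $Q_{y,v}^{(\widetilde W,B)}$ is Gaussian on the product space, and by joint Gaussianity independence of the two marginals is equivalent to the vanishing of all cross-covariances. For $r\in[0,t_0]\cup[t_1,\tau_1]$ and $t\in[t_0,t_1]$ I would compute
\[
\operatorname{Cov}(\widetilde W(W)(r),B(W)(t))=R_{s^{y,v}}(r,t)-\tfrac{t_1-t}{t_1-t_0}R_{s^{y,v}}(r,t_0)-\tfrac{t-t_0}{t_1-t_0}R_{s^{y,v}}(r,t_1),
\]
and argue that each of the three terms vanishes: since $(t_0,t_1)$ is knot-free, $r$ and every point of $[t_0,t_1]$ lie in different half-open subintervals of the partition induced by the sorted knots, on which $R_{s^{y,v}}$ vanishes, together with the vanishing of $R_{s^{y,v}}$ at the knots $t_0$ and $t_1$. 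Hence the cross-covariances vanish and the joint measure factors as claimed.

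The hard part I anticipate is the careful bookkeeping for the covariance arguments in (ii) and (iv): one has to sort the unordered knots, verify that $t_0$ and $t_1$ are adjacent so that $(t_0,t_1)$ is truly knot-free, and enumerate the possible positions of $r$ (including the boundary cases $r=t_0$, $r=t_1$ and the case where $r\in[t_1,\tau_1]$ crosses further sampling knots). Once this partition analysis is in place, everything reduces to the standard fact that a Gaussian measure on $C(\cdot)$ is uniquely determined by its mean and covariance.
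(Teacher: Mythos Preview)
Your proposal is correct and follows the same overall strategy as the paper: push forward the Gaussian measure $Q_{y,v}$ along the linear continuous maps $\widetilde W$ and $B$, identify means and covariances, and verify vanishing cross-covariances for (iv). Parts (i), (iii), and (iv) match the paper essentially verbatim.

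The one noteworthy difference is in part (ii). The paper computes $m(Q_{y,v}^{B})$ and $R(Q_{y,v}^{B})$ by a direct expansion of the definition of $B$, simplifying at the end via $R_{s^{y,v}}(t_0,t)=R_{s^{y,v}}(t_1,t)=0$ for $t\in[t_0,t_1]$. You instead first note that $R_{s^{y,v}}(t_0,t_0)=R_{s^{y,v}}(t_1,t_1)=0$, so under $Q_{y,v}$ the coordinates $W(t_0)$ and $W(t_1)$ are $Q_{y,v}$-a.s.\ equal to their means; combined with the affinity of $m_{s^{y,v},y,v}$ on $[t_0,t_1]$ this yields $B(W)=W-m_{s^{y,v},y,v}$ on $[t_0,t_1]$ a.s., and the covariance drops out immediately as $R_{s^{y,v}}|_{[t_0,t_1]^2}$. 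Your shortcut is conceptually cleaner and avoids the algebraic expansion; the paper's route is a mechanical check that does not invoke the zero-variance observation. Both are valid, and the ``hard part'' bookkeeping you anticipate is precisely what the paper handles by appealing to the piecewise structure of $R_{s^{y,v}}$.
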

\begin{proof}
The property (i) is obvious. Since $B$ is a linear continuous mapping hence $Q_{y,v}^{B}$ is a
Gaussian measure. Next, observe that for all $t\in [t_0, t_1]$ we have
\begin{align*}
m(Q_{y,v})(t)&=\frac{t_1-t}{t_1-t_0}y_{\pi(i^*)}+\frac{t-t_0}{t_1-t_0}y_{\pi(i^*+1)}.
\end{align*}
In particular, for all $t\in[t_0, t_1]$,
\begin{equation}\label{19}
m(Q_{y,v})(t)-\frac{t_1-t}{t_1-t_0}m(Q_{y,v})(t_0)
-\frac{t-t_0}{t_1-t_0}m(Q_{y,v})(t_1)=0.
\end{equation}
Hence for all
$t\in [t_0, t_1]$,
\begin{align*}
m(Q_{y,v}^{B})(t)&=\int_{C([t_0, t_1])}b(t)Q_{y,v}^{B}(db)\\
&= \int_{C([0, T])}\Bigl(w(t)-\frac{t_1-t}{t_1-t_0}w(t_0)-\frac{t-t_0}{t_1-t_0}w(t_1)\Bigr)Q_{y,v}(dw)\\
&=m(Q_{y,v})(t)-\frac{t_1-t}{t_1-t_0}m(Q_{y,v})(t_0) -\frac{t-t_0}{t_1-t_0}m(Q_{y,v})(t_1)=0
\end{align*}
and for all $r,t\in [t_0, t_1]$,
\begin{align*}
R(Q_{y,v}^{B})(r,t)
&=\int_{C([t_0, t_1])}\bigl(b(r)-m(Q_{y,v}^{B})(r)\bigr)\cdot \bigl(b(t)-m(Q_{y,v}^{B})(t)\bigr) Q_{y,v}^{B}(db)\\
&= \int_{C([0, T])}\Bigl(w(r)-\frac{t_1-r}{t_1-t_0}w(t_0)-\frac{r-t_0}{t_1-t_0}w(t_1)\Bigr)\\
&\hspace{3cm}\cdot \Bigl(w(t)-\frac{t_1-t}{t_1-t_0}w(t_0)-\frac{t-t_0}{t_1-t_0}w(t_1)\Bigr)Q_{y,v}(dw),
\end{align*}
and hence
\begin{align*}
R(Q_{y,v}^{B})(r,t)&= \int_{C([0, T])}\Bigl(w(r)-m(Q_{y,v})(r)-\frac{t_1-r}{t_1-t_0}(w(t_0)-m(Q_{y,v})(t_0))\\
&\hspace{6cm}-\frac{r-t_0}{t_1-t_0}(w(t_1)-m(Q_{y,v})(t_1))\Bigr)\\
&\hspace{3cm}\cdot \Bigl(w(t)-m(Q_{y,v})(t)-\frac{t_1-t}{t_1-t_0}(w(t_0)-m(Q_{y,v})(t_0))\\
&\hspace{6cm}-\frac{t-t_0}{t_1-t_0}(w(t_1)-m(Q_{y,v})(t_1))\Bigr)Q_{y,v}(dw)\\
&=R(Q_{y,v})(r,t)-\frac{t_1-t}{t_1-t_0}R(Q_{y,v})(r,t_0)-\frac{t-t_0}{t_1-t_0}R(Q_{y,v})(r,t_1)\\
&\quad-\frac{t_1-r}{t_1-t_0}\Bigl(R(Q_{y,v})(t_0,t)-\frac{t_1-t}{t_1-t_0}R(Q_{y,v})(t_0,t_0)-\frac{t-t_0}{t_1-t_0}R(Q_{y,v})(t_0,t_1)\Bigr)\\
&\quad-\frac{r-t_0}{t_1-t_0}\Bigl(R(Q_{y,v})(t_1,t)-\frac{t_1-t}{t_1-t_0}R(Q_{y,v})(t_1,t_0)-\frac{t-t_0}{t_1-t_0}R(Q_{y,v})(t_1,t_1)\Bigr).
\end{align*}
Observing that $R(Q_{y,v})(t_0,t)=R(Q_{y,v})(t_1,t)=0$ for all $t\in [t_0, t_1]$ we thus obtain
\[
R(Q_{y,v}^{B})(r,t)=R(Q_{y,v})(r,t)
\]
for all $r,t\in [t_0, t_1]$, which completes the proof of (ii). The property (ii) implies that
$Q_{y,v}^{-B}$ is the Gaussian measure on $\mathfrak B(C([t_0, t_1]))$ with  mean $m(Q_{y,v}^{-B})=0$ and
covariance function $R(Q_{y,v}^{-B})=R(Q_{y,v}^{B})$, which yields
the property (iii). Next, we prove (iv). Using the properties (i),
(ii) as well as \eqref{19} and the fact that $R(Q_{y,v})(r,t)=0$ for all $r\in [0,t_0]\cup [t_1,
\tau_1]$ and $t\in [t_0, t_1]$ we obtain
\begin{align*}
&\int_{C([0,T])}\bigl(\widetilde W(w)(r)-m(Q_{y,v}^{\widetilde
W})(r)\bigr)\cdot\bigl(B(w)(t)-m(Q_{y,v}^{B})(t)\bigr)
Q_{y,v}(dw)\\
&=\int_{C([0,T])}\bigl(w(r)-m(Q_{y,v})(r)\bigr)\cdot\Bigl(w(t)-\frac{t_1-t}{t_1-t_0}w(t_0)-\frac{t-t_0}{t_1-t_0}w(t_1)\Bigr)
Q_{y,v}(dw)\\
&=\int_{C([0,T])}\bigl(w(r)-m(Q_{y,v})(r)\bigr)\cdot\Bigl(w(t)-m(Q_{y,v})(t)-\frac{t_1-t}{t_1-t_0}(w(t_0)-m(Q_{y,v})(t_0))\\
&\hspace{8cm}-\frac{t-t_0}{t_1-t_0}(w(t_1)-m(Q_{y,v})(t_1))\Bigr)
Q_{y,v}(dw)\\
&=R(Q_{y,v})(r,t)-\frac{t_1-t}{t_1-t_0}R(Q_{y,v})(r,t_0)-\frac{t-t_0}{t_1-t_0}
R(Q_{y,v})(r,t_1)=0
\end{align*} for all $r\in [0,t_0]\cup [t_1,
\tau_1]$ and $t\in [t_0, t_1]$, which means that $\widetilde W(r)$ and $B(t)$ are uncorrelated.
This and the fact that $Q_{y,v}^{\widetilde W}$ and $Q_{y,v}^{B}$
are Gaussian measures implies (iv).


\end{proof}

In the proof of Theorem~\ref{t1} we employ the following lower bound
for the first absolute moment of the sine of a normally distributed
random variable, which is a generalization of Lemma 4.2 from
~\cite{JMGY15}, where a centered normally distributed random
variable has been considered.

\begin{lemma}
\label{l2}
Let $a\in\R$, $ \tau \in [1,\infty) $,
and let $ Y \colon \Omega \to \R $ be a
$ \mathcal{N}( a, \tau^2 ) $-distributed random variable.
Then
\[
  \EE\big[
    | \sin(Y) |
  \big]
  \ge
  \frac{ \exp\bigl(- \tfrac{\pi^2}{8}\bigr)}{ \sqrt{ 8 \pi } }
  .
\]
\end{lemma}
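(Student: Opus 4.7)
The plan is to use the elementary pointwise inequality $|\sin(y)| \ge \sin^2(y)$, valid because $|\sin(y)| \le 1$. This reduces the problem to a lower bound for $\EE[\sin^2(Y)]$, which can be computed in closed form from the characteristic function of $Y$ and yields a bound that is uniform in $\tau \ge 1$.

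First I would write $\sin^2(y) = (1 - \cos(2y))/2$ and use the characteristic function $\EE[e^{itY}] = \exp(iat - t^2\tau^2/2)$ of $Y \sim \mathcal{N}(a,\tau^2)$ to obtain $\EE[\cos(2Y)] = e^{-2\tau^2}\cos(2a)$, hence
\[
\EE[|\sin(Y)|] \,\ge\, \EE[\sin^2(Y)] \,=\, \frac{1 - e^{-2\tau^2}\cos(2a)}{2}.
\]
Next, I would apply $\cos(2a) \le 1$ together with $\tau \ge 1$ to conclude $\EE[|\sin(Y)|] \ge (1 - e^{-2})/2$.

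The final step is the numerical verification that $(1 - e^{-2})/2 \ge \exp(-\pi^2/8)/\sqrt{8\pi}$, which is immediate from coarse estimates such as $e^{-2} < 1/2$ (so $(1 - e^{-2})/2 > 1/4$) and $\exp(-\pi^2/8) < 1/e < 1/2$ combined with $\sqrt{8\pi} > 4$ (so $\exp(-\pi^2/8)/\sqrt{8\pi} < 1/8$). This verification is the only step that is not purely symbolic and would be the main obstacle if one insisted on producing the right-hand side exactly by algebraic means. Note however that the bound $(1 - e^{-2})/2 \approx 0.43$ obtained from this argument is considerably stronger than the stated $\exp(-\pi^2/8)/\sqrt{8\pi} \approx 0.058$; the particular form of the stated bound is presumably inherited from the centered case treated in \cite{JMGY15}, where a more direct interval-based estimate on $[-\pi/2,\pi/2]$ combined with the Gaussian density bound $\tfrac{1}{\sqrt{2\pi}\tau}\exp(-\pi^2/(8\tau^2))$ might naturally produce this expression (though such a direct interval approach degrades as $\tau \to \infty$ and would require supplementation by a multi-period argument, whereas the $\sin^2$ route handles all $\tau \ge 1$ at once).
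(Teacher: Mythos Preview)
Your proof is correct and takes a genuinely different route from the paper's. The paper does not spell out its argument in full but indicates that it is a straightforward generalisation of Lemma~4.2 in \cite{JMGY15}: one writes out the Gaussian integral, restricts the integration variable to $[0,\pi/2]$ where the density satisfies $\tfrac{1}{\sqrt{2\pi}}e^{-z^2/2}\ge \tfrac{1}{\sqrt{2\pi}}e^{-\pi^2/8}$, and then, after the substitution $x=\tau z$, uses the $\pi$-periodicity of $|\sin(\cdot)|$ to show that $\int_0^{\tau\pi/2}|\sin(x+c)|\,dx \ge \lfloor\tau\rfloor \ge \tau/2$. This is exactly the ``interval plus multi-period'' argument you anticipated in your final paragraph, and it is what produces the specific constant $e^{-\pi^2/8}/\sqrt{8\pi}$.

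Your approach via $|\sin(Y)|\ge \sin^2(Y)$ and the characteristic function is shorter, avoids any case analysis in $\tau$, and gives the much sharper constant $(1-e^{-2})/2\approx 0.43$ in place of $\approx 0.058$. The paper's method has the (minor) advantage that its constant arises organically, so no separate numerical comparison is needed at the end; in your argument the final inequality $(1-e^{-2})/2 \ge e^{-\pi^2/8}/\sqrt{8\pi}$ is an extrinsic check, though your coarse estimates $e^{-2}<1/2$, $\pi^2/8>1$, $8\pi>16$ handle it cleanly. Either way, for the purposes of Theorem~\ref{t1} only a positive universal constant is needed, so both arguments are entirely adequate.
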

The proof of Lemma \ref{l2} is a straighforward generalization of the proof of Lemma 4.2 from
~\cite{JMGY15}.

We now proceed with the proof of Theorem \ref{t1}.

 We first consider the case  $\delta\in (0, \tau_1/2]$.
Let $N\in\N$ be such that $\widehat X\in \alg_{N}^\delta$. We may then assume that $\widehat X\in \alg_{N+1}^\delta$ and that $\widehat X$ uses  the
evaluation site $\delta/2$. 
  Let
$ \psi \in C^{ \infty }( \R) $ be positive, strictly increasing
with $
  \lim_{ x \to \infty } \psi( x ) = \infty
$ and $
  1\in \psi(\R).
$
Using Remark \ref{solution}, the assumptions on the functions $f$
and $g$ and It\^o's formula we obtain that $\PP$-a.s.
\begin{equation}\label{44}
X_2^\psi(\tau_1)=-\int_0^{\tau_1} f'(t)W(t)dt, \quad X_3^\psi(\tau_2)=-\int_{\tau_1}^{\tau_2} g'(t)W(t)dt.
\end{equation}
Thus, $\PP$-a.s.
\begin{equation}\label{11}
X_4^{ \psi }( T )=F(W),
\end{equation}
where $F\colon C([0,T])\to \R$ is given by
\[
F(w)=\gamma \cdot \cos\Bigl(\int_0^{\tau_1} f'(t)w(t)dt \cdot \psi\Bigl(-\int_{\tau_1}^{\tau_2} g'(t)w(t)dt\Bigr)\Bigr)
\]
and $\gamma$ is defined in \eqref{51}.
Recall the definition \eqref{Omega1} of the measurable space $(\Omega_1, \mathcal F_1)$ and define a function $G\colon \Omega_1\to\R^4$ by
\[
G(y,v)=\phi_n(y,v)
\]
for $y\in\R^n$ and $v\in C([\delta, T])$. Due to the measurability of the functions $\phi_n$, $n\in\N$, the function $G$ is $\mathcal F_1$-$\mathfrak B(\R^4)$ measurable. Moreover,
\begin{equation}\label{12}
\widehat X=\phi_{\nu}(D_{\nu})=G(D_{\nu}).
\end{equation}
Let $\text{pr}_4\colon \R^4\to\R$ denote the projection to the
fourth component. Due to \eqref{11} and \eqref{12} we have
\begin{equation}\label{42}
\EE| X_4^{ \psi }( T ) -\widehat X_4|=\EE
    |
      F(W) -\text{pr}_4(G(D_{\nu}))
    |=\EE\,\bigl(\EE\bigl(
    |
      F(W) -\text{pr}_4(G(D_{\nu}))
    |\,\bigr|\,D_\nu\bigr)\bigr).
\end{equation}
Lemma \ref{regular} implies that for $\PP^{D_\nu}$-a.a. $(y,v)\in\Omega_1$
\begin{align}\label{13}
\EE\bigl(
    |
      F(W) -\text{pr}_4(G(D_{\nu}))
    |\,\bigr|\,D_\nu=(y,v)\bigr)
    &=\int_{C([0,T])}  |
      F(w) -\text{pr}_4(G(y,v))
    |\, Q_{y,v}(dw).
\end{align}

Fix $n\in\N$,  $y\in\R^n$ and $v\in C([\delta,T])$. 
We show that
\begin{align}\label{41}
&\int_{C([0,T])}  |
      F(w) -\text{pr}_4(G(y,v))
    |\, Q_{y,v}(dw)\notag\\
    &\qquad\qquad\qquad\geq \frac{ \gamma \exp \bigl( - \tfrac{\pi^2}{ 4}\bigr) }{  8 \pi  }
  \cdot 1_{[1, \infty)}\Bigl(\sqrt{\tfrac{\alpha\delta^3}{96 n^3}}\cdot\psi\Bigl(-\int_{\tau_1}^{\tau_2} g'(t)\, v(t)dt\Bigr)\Bigr),
\end{align}
where $\alpha$ is given by \eqref{51}.
Recall the definitions \eqref{14} and \eqref{36} of the time points
$s^{y,v}_0, \ldots, s^{y,v}_{n+1}$, let $\pi$ be the permutation of $\{0, \ldots, n+1\}$ such that $s^{y,v}_{\pi(0)}<\ldots <s^{y,v}_{\pi(n+1)}$,  and  
let $t_0$ and  $t_1$ be given by \eqref{t0t1} with
\begin{equation}\label{34}
i^*=\min\{i\in\{0, \ldots, n\}:|s^{y,v}_{\pi(i+1)}-s^{y,v}_{\pi(i)}|\geq\delta/(n+1)\}.
\end{equation}
Put
\[ C_1=C([0, t_0]\cup [t_1, \tau_1]),\quad C_2=C([t_0,
t_1]),\quad C_3=C([\tau_1, \tau_2]),
\] let $ \widetilde W\colon
C([0,T])\to C_1$ and $B\colon C([0,T])\to C_2$ be given by
\eqref{18} and \eqref{20}. respectively, and define  $\overline W\colon C([0,T])\to
C_3$ by
\[
\overline W(w)=w|_{[\tau_1,\tau_2]}
\]
for $w\in C([0,T])$. Moreover, define mappings $H_1\colon C_1
\to\R$, $H_2\colon C_2\to\R$, $H_3\colon C_3\to\R$ and $J\colon
C_1\times C_2\times C_3\to\R$ by
\begin{align*}
&H_1( \widetilde w)=\int_{0}^{t_0} f'(t)\, \widetilde w(t)dt+\int_{t_0}^{t_1} f'(t)\,\Bigl(\frac{t-t_0}{t_1-t_0} \widetilde w(t_1)+\frac{t_1-t}{t_1-t_0} \widetilde
w(t_0)\Bigr)dt+\int_{t_1}^{\tau_1} f'(t)\, \widetilde w(t)dt,\\
&H_2(b)=\int_{t_0}^{t_1} f'(t)\,b(t)dt, \quad H_3(\overline w)=-\int_{\tau_1}^{\tau_2} g'(t)\, \overline w(t)dt
\end{align*}
as well as
\[
J( \widetilde w,b, \overline w)=\gamma\cdot
\cos\bigl((H_1(\widetilde w)+H_2(b))\cdot \psi(H_3( \overline
w))\bigr)
\]
 for $\widetilde w\in C_1$, $b\in C_2$ and $\overline w\in C_3$. We
 then have
\[
F=J(\widetilde W, B, \overline W).
\]
Clearly, $ Q^{\overline W}_{y,v}$ is the dirac measure concentrated at $v|_{[\tau_1, \tau_2]}$.
 Using Lemma \ref{induced}(iii),(iv) and the triangle inequality we
thus obtain
\begin{align*}
&\int_{C([0,T])}  |
      F(w) -\text{pr}_4(G(y,v))
    |\, Q_{y,v}(dw)\\
    &\qquad\quad =\int_{C_1}\int_{C_2} |
      J(\widetilde w, b, v|_{[\tau_1, \tau_2]}) -\text{pr}_4(G(y,v))
    |\, Q^{B}_{y,v}(db)\,
    Q^{\widetilde W}_{y,v}(d\widetilde w)\\
    &\qquad\quad  =\int_{C_1}\int_{C_2} \frac{1}{2}\, \bigl(|
      J(\widetilde w, b, v|_{[\tau_1, \tau_2]})  -\text{pr}_4(G(y,v))
    |\\
    &\hspace{5cm}+|
      J(\widetilde w, -b, v|_{[\tau_1, \tau_2]})  -\text{pr}_4(G(y,v))
    |\bigr)\, Q^{B}_{y,v}(db)\,
    Q^{\widetilde W}_{y,v}(d\widetilde w)\\
    &\qquad\quad  \geq\int_{C_1}\int_{C_2}   \frac{1}{2}\,|
      J(\widetilde w, b, v|_{[\tau_1, \tau_2]})-
      J(\widetilde w, -b, v|_{[\tau_1, \tau_2]})
    |\, Q^{B}_{y,v}(db)\,
 Q^{\widetilde W}_{y,v}(d\widetilde w).
\end{align*}
Put $z=H_3(v|_{[\tau_1, \tau_2]})$. The fact that for all
$x,y\in\R$
\[
\cos(x)-\cos(y)=2\sin(\tfrac{y-x}{2}) \sin(\tfrac{y+x}{2})
\]
thus implies
\begin{align*}
&\int_{C([0,T])}  |
      F(w) -\text{pr}_4(G(y,v))
    |\, Q_{y,v}(dw)\\
    &\qquad\qquad \geq\gamma \int_{C_1}|
     \sin\bigl(H_1( \widetilde w)\cdot \psi(z)\bigr)|\,  Q^{\widetilde W}_{y,v}(d\widetilde w)\cdot \int_{C_2}
    |\sin\bigl(H_2(b)\cdot \psi(z)\bigr)
    |\, Q^{B}_{y,v}(db)\\
    &\qquad\qquad =\gamma \int_{\R}|
     \sin(y\cdot \psi(z))|\,  Q^{H_1(\widetilde W)}_{y,v}(dy)\cdot \int_{\R}
    |\sin(y\cdot \psi(z))
    |\, Q^{H_2(B)}_{y,v}(dy).    
\end{align*}
Due to Lemma \ref{induced}(i),(ii), $Q^{\widetilde W}_{y,v}$ and
$Q^{B}_{y,v}$ are Gaussian measures on $\mathfrak B(C_1)$ and
$\mathfrak B(C_2)$, respectively. Since the mappings $H_1$ and $H_2$
are linear and continuous we conclude that $Q^{H_1(\widetilde
W)}_{y,v}$ and $Q^{H_2(B)}_{y,v}$ are Gaussian measures on
$\mathfrak B(\R)$. Let $m_1, \sigma_1^2$ and  $m_2, \sigma_2^2$ denote the mean and the
variance of $Q^{H_1(\widetilde W)}_{y,v}$ and $Q^{H_2(B)}_{y,v}$,
respectively. Applying Lemma \ref{l2} we obtain
\[
\int_{\R}|
     \sin(y\cdot \psi(z))|\,  Q^{H_1(\widetilde W)}_{y,v}(dy)\geq \frac{ \exp \bigl(- \tfrac{\pi^2}{8}\bigr) }{ \sqrt{ 8 \pi } } 
  \cdot 1_{[1, \infty)}(\sigma_1\cdot\psi(z))
\]
as well as
\[
\int_{\R}
      |\sin(y\cdot \psi(z))
    |\, Q^{H_2(B)}_{y,v}(dy)\geq \frac{ \exp \bigl(- \tfrac{\pi^2}{8}\bigr) }{ \sqrt{ 8 \pi } } \cdot 1_{[1, \infty)}(\sigma_2\cdot\psi(z)).
\]
Hence
\begin{equation}\label{40}
\int_{C([0,T])}  |
      F(w) -\text{pr}_4(G(y,v))
    |\, Q_{y,v}(dw)\geq \frac{ \gamma  \exp\bigl(- \tfrac{\pi^2}{4}\bigr) }{  8 \pi  }
 \cdot 1_{[1, \infty)}\bigl(\min(\sigma_1,\sigma_2)\cdot\psi(z)\bigr).
\end{equation}
Next we derive lower bounds for $\sigma_1^2$ and $\sigma_2^2$.  Due to Lemma \ref{induced}(ii) we have
\[
m_2=\int_{C_2} H_2(b)\, Q^B_{y,v}(db)=\int_{t_0}^{t_1}f'(t)\cdot m(Q^{B}_{y,v})(t)\,dt=0
\]
as well as
\begin{align*}
\sigma_2^2&=\int_{C_2} (H_2(b)-m_2)^2\, Q^B_{y,v}(db)=\int_{t_0}^{t_1}\int_{t_0}^{t_1} f'(r)\cdot f'(t)\cdot R(Q^{B}_{y,v})(r,t)dr dt\\
&=\int_{t_0}^{t_1}\int_{t_0}^{t_1} f'(r)\cdot f'(t)\cdot \frac{(t_1-\max(r,t))\cdot (\min(r,t)-t_0)}{t_1-t_0}dr dt.
\end{align*}
It is easy to see that for all $a,b\in\R$ with $a<b$
\begin{equation}\label{30}
\int_{a}^{b}\int_{a}^{b} \frac{(b-\max(r,t))\cdot (\min(r,t)-a)}{b-a}dr dt=\frac{(b-a)^3}{12}.
\end{equation}
Moreover, the assumption $ \inf_{ t\in [ 0, \tau_1/2  ] } | f'(t) | > 0$ implies that for all $r,t\in[0, \tau_1/2]$
\begin{equation}\label{31}
f'(r)\cdot f'(t)=|f'(r)\cdot f'(t)|\geq \alpha.
\end{equation}
Observing \eqref{34} we thus obtain
\begin{equation}\label{35}
\sigma_2^2\geq \frac{\alpha(t_1-t_0)^3}{12}\geq  \frac{\alpha\delta^3}{12(n+1)^3}.
\end{equation}
Put $s_i=s^{y,v}_{\pi(i)}$ for $i=0, \ldots, n+1$ as well as  $s_{n+2}=\tau_1$. Clearly,
\begin{align*}
m_1&=\int_{C_1} H_1(\widetilde w)\, Q^{\widetilde W}_{y,v}(d\widetilde w )=\sum_{i\in \{0, \ldots, n+1\}\setminus \{i^*\}}\int_{s_i}^{s_{i+1}} f'(t) m(Q^{\widetilde W}_{y,v})(t)dt\\
&\hspace{5.3cm}+\int_{t_0}^{t_1} f'(t)\Bigl(\frac{t-t_0}{t_1-t_0}  m(Q^{\widetilde W}_{y,v})(t_1)+\frac{t_1-t}{t_1-t_0}  m(Q^{\widetilde W}_{y,v})(t_0)\Bigr)dt.
\end{align*}
Hence
\begin{align*}
\sigma_1^2&=\int_{C_1} (H_1(\widetilde w)-m_1)^2\, Q^{\widetilde W}_{y,v}(d\widetilde w )\\
&= \int_{C_1} \Bigl(\sum_{i\in \{0, \ldots, n+1\}\setminus \{i^*\}}\int_{s_i}^{s_{i+1}} f'(t)\cdot (\widetilde w(t)-m(Q^{\widetilde W}_{y,v})(t))dt\\
&\hspace{2cm} +\int_{t_0}^{t_1} f'(t)\cdot\frac{t-t_0}{t_1-t_0} \cdot (\widetilde w(t_1)-m(Q^{\widetilde W}_{y,v})(t_1))dt\\
&\hspace{4cm}+\int_{t_0}^{t_1}  f'(t)\cdot \frac{t_1-t}{t_1-t_0}\cdot (\widetilde w(t_0)-  m(Q^{\widetilde W}_{y,v})(t_0))dt\Bigr)^2 Q^{\widetilde W}_{y,v}(d\widetilde w ).
\end{align*}
Lemma \ref{induced}(i)  implies that $R(Q^{\widetilde W}_{y,v})(r, t)=0$ for all  $r\in[s_i, s_{i+1}]$ and $t\in [s_j, s_{j+1}]$ and all $i, j\in\{0, \ldots, n\}$ with $i\not =j$ 
as well as  $R(Q^{\widetilde W}_{y,v})(r, t)=0$ for all $r\in [0, \tau_1]$ and $t\in \{t_0, t_1\}\cup [s_{n+1}, s_{n+2}]$. Thus
\begin{align*}
\sigma_1^2
&=\sum_{i\in \{0, \ldots, n\}\setminus \{i^*\}}\int_{s_i}^{s_{i+1}}\int_{s_i}^{s_{i+1}} f'(r)\cdot f'(t)\cdot R(Q^{\widetilde W}_{y,v})(r,t)dr dt\\
&= \sum_{i\in \{0, \ldots, n\}\setminus \{i^*\}}\int_{s_i}^{s_{i+1}}\int_{s_i}^{s_{i+1}} f'(r)\cdot f'(t)\cdot \frac{(s_{i+1}-\max(r,t))\cdot (\min(r,t)-s_i)}{s_{i+1}-s_i}dr dt.
\end{align*}
Using \eqref{31}, \eqref{30} and the H\"older inequality we therefore obtain
\begin{align*}
\sigma_1^2
&\geq \sum_{i\in \{0, \ldots, n\}\setminus \{i^*\}}\int_{s_i}^{s_{i+1}}\int_{s_i}^{s_{i+1}} |f'(r)\cdot f'(t)|\cdot \frac{(s_{i+1}-\max(r,t))\cdot (\min(r,t)-s_i)}{s_{i+1}-s_i}dr dt\\
&\geq \frac{\alpha}{12}\sum_{i\in \{0, \ldots, n\}\setminus \{i^*\}} (s_{i+1}-s_i)^3\\
&\geq \frac{\alpha}{12 n^2}\cdot\bigl(\sum_{i\in \{0, \ldots, n\}\setminus \{i^*\}} (s_{i+1}-s_i) \bigr)^3= \frac{\alpha}{12 n^2}\cdot (\delta-(t_1-t_0))^3.
\end{align*}
The assumption that $\widehat X$ uses the evaluation site $\delta/2$ implies that $t_1-t_0\leq\delta/2$. Hence
\begin{equation}\label{360}
\sigma_1^2\geq  \frac{\alpha\delta^3}{96 n^2}.
\end{equation}
The desired lower bound \eqref{41} follows from \eqref{40}, \eqref{35} and \eqref{360}.

We conclude from \eqref{42},\eqref{13} \eqref{41} and \eqref{44} that
\begin{equation}\label{71}
\EE| X_4^{ \psi }( T ) -\widehat X_4|\geq \frac{ \gamma \exp \bigl(- \tfrac{\pi^2}{4}\bigr) }{  8 \pi  }\cdot
   \PP \Bigl(\Bigl\{\psi(X_3^\psi(\tau_2))\geq \sqrt{\tfrac{96 \nu^3}{\alpha \delta^3}}\Bigr\}\Bigr).
\end{equation}
Put
\[
A=\Bigl\{\psi(X_3^\psi(\tau_2))\geq \bigl(1+ \sqrt{\tfrac{96 }{\alpha \delta^3}}\bigr) N^3\Bigr\}, \quad B=\{\nu\leq N^2\}.
\]
Clearly,
\begin{align}\label{70}
\PP \Bigl(\Bigl\{\psi(X_3^\psi(\tau_2))\geq \sqrt{\tfrac{96 \nu^3}{\alpha \delta^3}}\Bigr\}\Bigr)&\geq \PP \Bigl(\Bigl\{\psi(X_3^\psi(\tau_2))\geq \sqrt{\tfrac{96 \nu^3}{\alpha \delta^3}}\Bigr\}\cap \{\nu\leq N^2\}\Bigr)\notag\\
&\geq \PP(A\cap B)\geq\PP(A)-\PP(B^c).
\end{align}
Note that $X_3^\psi(\tau_2)\sim  \mathcal{N}( 0, \beta ) $, where $\beta$ is given by \eqref{51}. Moreover, the assumption that $\psi$ is continuous with $\lim_{x\to\infty} \psi(x)=\infty$ and  $1\in\psi(\R)$ ensures that
\[
\bigl(1+ \sqrt{\tfrac{96 }{\alpha \delta^3}}\bigr) N^3\in\psi(\R).
\]
Hence
\begin{align}\label{72}
\PP (A)&=\PP\Bigl(\Bigl\{X_3^\psi(\tau_2)\geq \psi^{-1}\bigl(\bigl(1+ \sqrt{\tfrac{96 }{\alpha \delta^3}}\bigr)  N^3\bigr)\Bigr\}\Bigr)\notag\\
&\geq \frac{1}{\sqrt{2\pi\beta}}\int_{\psi^{-1}\bigl(\bigl(1+ \sqrt{\tfrac{96 }{\alpha \delta^3}}\bigr)  N^3\bigr)}^{1+\psi^{-1}\bigl(\bigl(1+ \sqrt{\tfrac{96 }{\alpha \delta^3}}  \bigr)N^3\bigr)} \exp\bigl(-\tfrac{x^2}{2\beta}\bigr) dx\notag\\
&\geq \frac{1}{\sqrt{2\pi\beta}}\cdot\exp\bigl(-\tfrac{1}{2\beta}\cdot \bigl(1+\psi^{-1}\bigl(\bigl(1+ \sqrt{\tfrac{96 }{\alpha \delta^3}}\bigr)  N^3\bigr)\bigr)^2\bigr)\notag\\
&\geq \frac{\exp\bigl(-\tfrac{1}{\beta}\bigr)}{\sqrt{2\pi\beta}}\cdot\exp\bigl(-\tfrac{1}{\beta}\cdot \bigl(\psi^{-1}\bigl(\bigl(1+ \sqrt{\tfrac{96 }{\alpha \delta^3}}\bigr)  N^3\bigr)\bigr)^2\bigr).
\end{align}
By the Markov inequality and the fact that $\widehat X\in\alg^\delta_{N+1}$,
\begin{equation}\label{73}
\PP(B^c)\leq \frac{\EE\nu}{N^2}\leq \frac{N+1}{N^2}\leq  \frac{2}{N}.
\end{equation}
Estimates \eqref{71}, \eqref{70}, \eqref{72} and \eqref{73} imply \eqref{61}, which completes the proof of the theorem in the case $\delta\in(0, \tau_1/2]$.

If $\delta\in (\tau_1/2,T]$ then the lower bound \eqref{61} follows from the  fact that $\alg_N^\delta\subseteq \alg_N^{\tau_1/2}$ and the lower bound \eqref{61} in the case $\delta=\tau_1/2$.

\section*{Acknowledgement}
I am
grateful to Thomas M\"uller-Gronbach for stimulating discussions
on the topic of this article.

\bibliographystyle{acm}
\bibliography{bibfile}

\end{document}